\documentclass[11pt]{amsart}
\usepackage{amsfonts,amssymb,amsthm,eucal,color,bbm,verbatim,enumitem}

\usepackage[margin=1.25in]{geometry}

\newtheorem{thm}{Theorem}
\newtheorem{cor}[thm]{Corollary}
\newtheorem{lemma}[thm]{Lemma}
\newtheorem{prop}[thm]{Proposition}

\newcommand{\R}{\mathbb{R}}
\newcommand{\E}{\mathbb{E}}

\newcommand{\Prob}{\mathbb{P}}
\newcommand{\N}{\mathbb{N}}

\newcommand{\C}{\mathbb{C}}

\newcommand{\ds}{\displaystyle}

\renewcommand{\Im}{\operatorname{Im}}
\newcommand{\abs}[1]{\left\vert #1 \right\vert}
\newcommand{\norm}[1]{\left\Vert #1 \right\Vert}

\DeclareMathOperator{\var}{Var}

\DeclareMathOperator{\Ent}{Ent}

\newcommand{\Unitary}[1]{\mathbb{U}\left(#1\right)}
\newcommand{\SUnitary}[1]{\mathbb{SU}\left(#1\right)}
\newcommand{\Orthogonal}[1]{\mathbb{O}\left(#1\right)}
\newcommand{\SOrthogonal}[1]{\mathbb{SO}\left(#1\right)}
\newcommand{\Symplectic}[1]{\mathbb{S}\mathbbm{p}\left(2 #1\right)}
\newcommand{\Circle}{\mathbb{S}^1}
\newcommand{\SOneg}[1]{\mathbb{SO}^{-}\left(#1\right)}

\newcommand{\ind}[1]{\mathbbm{1}_{#1}}

\allowdisplaybreaks[3]

\author{Elizabeth S.\ Meckes and Mark W.\ Meckes}

\thanks{E.\ Meckes's research is partially supported by the American Institute
of Mathematics and NSF grant DMS-0852898.}
\thanks{M.\ Meckes's research is
partially supported by NSF grant DMS-0902203.}

\address{Department of Mathematics, Case Western Reserve University,
10900 Euclid Ave., Cleveland, Ohio 44106, U.S.A.}

\email{elizabeth.meckes@case.edu}

\address{Department of Mathematics, Case Western Reserve University,
10900 Euclid Ave., Cleveland, Ohio 44106, U.S.A.}

\email{mark.meckes@case.edu}

\title{Spectral measures of powers of random matrices}

\begin{document}

\begin{abstract}
  This paper considers the empirical spectral measure of a power of a
  random matrix drawn uniformly from one of the compact classical
  matrix groups.  We give sharp bounds on the $L_p$-Wasserstein
  distances between this empirical measure and the uniform measure on
  the circle, which show a smooth transition in behavior when the
  power increases and yield rates on almost sure convergence when the
  dimension grows.  Along the way, we prove the sharp logarithmic
  Sobolev inequality on the unitary group.
\end{abstract}

\maketitle

\section{Introduction}

The eigenvalues of large random matrices drawn uniformly from the
compact classical groups are of interest in a variety of fields,
including statistics, number theory, and mathematical physics; see
e.g.\ \cite{Diaconis} for a survey.  An important general phenomenon
discussed at length in \cite{Diaconis} is that the eigenvalues of an
$N \times N$ random unitary matrix $U$, all of which lie on the circle $\Circle = \{z
\in \C : \abs{z} = 1\}$, are typically more evenly spread out than $N$
independently chosen uniform random points in $\Circle$.  It was found
by Rains \cite{Ra97} that the eigenvalues of $U^N$
are exactly distributed as $N$ independent random points in $\Circle$;
similar results hold for other compact Lie groups.  In subsequent work
\cite{Ra03}, Rains found that in a sense, the eigenvalues of $U^m$
become progressively more independent as $m$ increases from $1$ to
$N$.

In this paper we quantify in a precise way the degree of uniformity of
the eigenvalues of $U^m$ when $U$ is drawn uniformly from any of the
classical compact groups $\Unitary{N}$, $\SUnitary{N}$,
$\Orthogonal{N}$, $\SOrthogonal{N}$, and $\Symplectic{N}$.  We do this
by bounding, for any $p \ge 1$, the mean and tails of the $L_p$-Wasserstein distance $W_p$
between the empirical spectral measure $\mu_{N,m}$ of $U^m$ and the
uniform measure $\nu$ on $\Circle$ (see Section \ref{S:wasserstein}
for the definition of $W_p$).  In particular, we show in Theorem
\ref{T:mean-dist} that
\begin{equation}\label{E:mean}
\E W_p (\mu_{N,m}, \nu) \le C p \frac{\sqrt{m\left[\log
      \left(\frac{N}{m}\right) + 1 \right]}}{N}.
\end{equation}
Theorem \ref{T:dist-concentration} gives a subgaussian tail bound for
$W_p(\mu_{N,m}, \nu)$, which is used in Corollary \ref{T:unitary-BC}
to conclude that if $m = m(N)$, then with probability $1$, for all
sufficiently large $N$,
\begin{equation}\label{E:as-rate}
W_p(\mu_{N, m}, \nu) \le C p \frac{\sqrt{m \log(N)}}{N^{\min\{1, 1/2 + 1/p\}}}.
\end{equation}

In the case $m=1$ and $1\le p\le 2$, \eqref{E:mean} and
\eqref{E:as-rate} are optimal up
to the logarithmic factor, since the $W_p$-distance from
the uniform measure $\nu$ to \emph{any} probability
measure supported on $N$ points is at least $cN^{-1}$.
When $m = N$, Rains's theorem says that
$\mu_{N,N}$ is the empirical measure of $N$ independent uniformly
distributed points in $\Circle$, for which the estimate in
\eqref{E:mean} of order $N^{-1/2}$ is optimal (cf.\ \cite{dBGM}).  We
conjecture that Theorem \ref{T:mean-dist} and Corollary
\ref{T:unitary-BC}, which interpolate naturally between these extreme
cases, are optimal up to logarithmic factors in their entire parameter
space.

In the case that $m=1$ and $p=1$, these results improve the authors'
earlier results in \cite{MM12} (where $W_1(\mu_{N,1}, \nu)$ was
bounded above by $CN^{-2/3}$) to what we conjectured there was the
optimal rate; the results above are completely new for $m > 1$ or $p >
1$.

The proofs of our main results rest on three foundations: the fact
that the eigenvalues of uniform random matrices are determinantal
point processes, Rains's representation from \cite{Ra03} of the
eigenvalues of powers of uniform random matrices, and logarithmic
Sobolev inequalities.  In Section \ref{S:mrep}, we combine some
remarkable properties of determinantal point processes with Rains's
results to show that the number of eigenvalues of $U^m$ contained in
an arc is distributed as a sum of independent Bernoulli random
variables.  In Section \ref{S:meansvars}, we estimate the means and
variances of these sums, again using the connection with determinantal
point processes.  In Section \ref{S:wasserstein}, we first generalize
the method of Dallaporta \cite{Da12} to derive bounds on mean
Wasserstein distances from those data and prove Theorem
\ref{T:mean-dist}. Then by combining Rains's results with tensorizable
measure concentration properties which follow from logarithmic Sobolev
inequalities, we prove Theorem \ref{T:dist-concentration} and
Corollary \ref{T:unitary-BC}.  We give full details only for the case
of $\Unitary{N}$, deferring to Section \ref{S:other_groups} discussion
of the modifications necessary for the other groups.

In order to carry out the approach above, we needed the sharp
logarithmic Sobolev inequality on the full unitary group, rather than 
only on $\SUnitary{N}$ as in \cite{MM12}.  It has been noted
previously (e.g.\ in \cite{HiPeUe,AGZ}) that such a result is clearly
desirable, but that because the Ricci tensor of $\Unitary{N}$ is
degenerate, the method of proof which works for $\SUnitary{N}$,
$\SOrthogonal{N}$, and $\Symplectic{N}$ breaks down.  In the appendix, we prove the logarithmic Sobolev inequality on
$\Unitary{N}$ with a constant of optimal order.

\section{A miraculous representation of the eigenvalue counting
  function}\label{S:mrep}

As discussed in the introduction, a fact about the eigenvalue
distributions of matrices from the compact classical groups which we
use crucially is that they are determinantal point processes.  For
background on determinantal point processes the reader is referred to
\cite{HKPV06}.  The basic definitions will not be repeated here since
all that is needed for our purposes is the combination of Propositions
\ref{T:evs_detpp} and \ref{T:evs_detpp2} with Proposition
\ref{T:detpp_Bernoulli} and Lemma \ref{T:kernel_mean_var} below.  The
connection between eigenvalues of random matrices and determinantal
point processes has been known in the case of the unitary group at least
since \cite{Dyson}.  For the other groups, the earliest reference we
know of is \cite{KaSa}.  Although the language of determinantal point
processes is not used in \cite{KaSa}, Proposition \ref{T:evs_detpp}
below is essentially a summary of \cite[Section 5.2]{KaSa}. We first
need some terminology.

Given an eigenvalue $e^{i\theta}$, $0 \le \theta < 2\pi$, of a unitary
matrix, we refer to $\theta$ as an eigenvalue angle of the
matrix. Each matrix in $\SOrthogonal{2N+1}$ has $1$ as an eigenvalue,
each matrix in $\SOneg{2N+1}$ has $-1$ as an eigenvalue, and each
matrix in $\SOneg{2N+2}$ has both $-1$ and $1$ as eigenvalues; we
refer to all of these as trivial eigenvalues.  Here $\SOneg{N} = \{ U
\in \Orthogonal{N} : \det U = -1\}$, which is considered primarily as
a technical tool in order to prove our main results for
$\Orthogonal{N}$.  The remaining eigenvalues of matrices in
$\SOrthogonal{N}$ or $\Symplectic{N}$ occur in complex conjugate
pairs.  When discussing $\SOrthogonal{N}$, $\SOneg{N}$, or
$\Symplectic{N}$, we refer to the eigenvalue angles corresponding to
the nontrivial eigenvalues in the upper half-circle as nontrivial
eigenvalue angles.  For $\Unitary{N}$, all the eigenvalue angles are
considered nontrivial.

\begin{prop} \label{T:evs_detpp} The nontrivial eigenvalue angles of
  uniformly distributed random matrices in any of $\SOrthogonal{N}$,
  $\SOneg{N}$, $\Unitary{N}$, $\Symplectic{N}$ are a determinantal
  point process, with respect to uniform measure on $\Lambda$, with
  kernels as follows.

  \begin{center}
    \begin{tabular}{|c|c|c|}\hline
      & $K_N(x,y)$ & $\Lambda$ \\
      \hline\hline
      $\SOrthogonal{2N}$ & $\ds 1+\sum_{j=1}^{N-1}2\cos(jx)\cos(jy)$ & $[0,\pi)$ \\
      \hline
      $\SOrthogonal{2N+1}, \SOneg{2N+1}$ 
      & $\ds\sum_{j=0}^{N-1}2\sin\left(\frac{(2j+1)x}{2}\right)\sin\left(\frac{(2j+1)y}{2}\right)$
      & $[0,\pi)$ \\
      \hline
      $\Unitary{N}$ & $\ds\sum_{j=0}^{N-1}e^{ij(x-y)}$ & $[0,2\pi)$ \\
      \hline
      $ \Symplectic{N}, \SOneg{2N+2}$ & $\ds\sum_{j=1}^N2\sin(jx)\sin(jy)$
      & $[0,\pi)$ \\
      \hline
    \end{tabular}
  \end{center}
\end{prop}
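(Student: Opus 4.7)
The plan is to go group-by-group, start from the Weyl integration formula on each compact group, and manipulate the joint density of nontrivial eigenvalue angles into the form $\frac{1}{N!}\det[K_N(\theta_i,\theta_j)]_{i,j=1}^N$ with respect to the normalized uniform measure on $\Lambda$. Once that normal form is in hand, the conclusion that the point process is determinantal with correlation kernel $K_N$ is the standard theorem on $N$-point processes with ``biorthogonal'' (here projection) joint densities; see the discussion in \cite{HKPV06}. So there is really only one nontrivial step to do in each case, namely writing the squared Vandermonde-type factor as a single determinant of the claimed kernel.

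For $\Unitary{N}$, Weyl's formula gives the joint density of eigenvalue angles on $[0,2\pi)^N$ (with respect to the uniform probability measure) proportional to $\prod_{j<k}|e^{i\theta_j}-e^{i\theta_k}|^2$. Writing $\prod_{j<k}(e^{i\theta_k}-e^{i\theta_j}) = \det\bigl[e^{i(\ell-1)\theta_k}\bigr]_{\ell,k=1}^{N}$ and multiplying by its complex conjugate one obtains, via the Cauchy--Binet identity, exactly $\det\bigl[K_N(\theta_j,\theta_k)\bigr]$ for $K_N(x,y)=\sum_{j=0}^{N-1}e^{ij(x-y)}$; the constant $\frac{1}{N!}$ pops out of Weyl. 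For $\Symplectic{N}$ and $\SOrthogonal{2N+1}$, Weyl gives a density on $[0,\pi)^N$ proportional to $\prod_{j<k}(\cos\theta_j-\cos\theta_k)^2$ times a weight ($\prod_j\sin^2\theta_j$ for $\Symplectic{N}$, $\prod_j\sin^2(\theta_j/2)$ for $\SOrthogonal{2N+1}$), and one uses the Chebyshev-type identity $\det[\sin(j\theta_k)]_{j,k=1}^{N}=2^{\binom{N}{2}}\prod_j\sin\theta_j\prod_{j<k}(\cos\theta_j-\cos\theta_k)$ (respectively, the analogous identity with $\sin((2j+1)\theta_k/2)$) to absorb the weight and rewrite the squared product as a single determinant, which Cauchy--Binet again collapses to $\det[K_N(\theta_j,\theta_k)]$ with the kernel in the table. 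For $\SOrthogonal{2N}$, the Weyl density on $[0,\pi)^N$ is proportional to $\prod_{j<k}(\cos\theta_j-\cos\theta_k)^2$ with no extra weight, and one uses $\det[c_{j-1}(\theta_k)]_{j,k=1}^N$ with $c_0\equiv 1$, $c_j(\theta)=\sqrt{2}\cos(j\theta)$, whose Vandermonde evaluation produces exactly the kernel $1+\sum_{j=1}^{N-1}2\cos(jx)\cos(jy)$. The remaining two entries $\SOneg{2N+1}$ and $\SOneg{2N+2}$ are handled by the same argument after observing that fixing the trivial eigenvalues at $\pm 1$ reduces the Haar measure on the coset to a known Weyl-type density with one fewer or two fewer variables, and the resulting squared product is of the same Chebyshev form as for $\SOrthogonal{2N+1}$ or $\Symplectic{N}$.

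The main obstacle, and the only place where real work is required, is this last polynomial/trigonometric identity: the verification that each Weyl density, after clearing the sine weights for the non-unitary groups, is exactly $\frac{1}{N!}\det\bigl[K_N(\theta_j,\theta_k)\bigr]$ for the $K_N$ listed in the table. This is a finite computation in each of the four rows, standard in the Katz--Sarnak framework \cite{KaSa}; once it is done, the standard characterization of determinantal processes via projection kernels (the functions $1,\cos(jx),\sin(jx),\sin((2j+1)x/2)$ being orthonormal with respect to the uniform probability measure on the stated $\Lambda$) completes the proof.
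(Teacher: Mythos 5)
The paper gives no proof of this proposition; it states it as a known fact and cites \cite[Section 5.2]{KaSa} (and \cite{Dyson} for the unitary case) as the source. Your sketch --- Weyl integration formula, factoring the joint density into a squared generalized Vandermonde determinant via the Chebyshev-type identities, collapsing it to $\frac{1}{N!}\det[K_N(\theta_j,\theta_k)]$ via the determinant product rule, and then invoking the standard fact that an $N$-point process with joint density $\frac{1}{N!}\det[K_N]$ for a rank-$N$ projection kernel $K_N$ is determinantal with that correlation kernel --- is exactly the argument carried out in Katz--Sarnak and is correct. (One cosmetic point: what you invoke to pass from $|\det V|^2$ to $\det[\sum_\ell \phi_\ell(\theta_j)\overline{\phi_\ell(\theta_k)}]$ is just the product rule $\det(V^*V)=|\det V|^2$ for square matrices, not the general Cauchy--Binet identity, though of course the latter contains the former.)
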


\medskip

Proposition \ref{T:evs_detpp} allows us to apply the following result from
\cite{HKPV06}; see also Corollary 4.2.24 of \cite{AGZ}.

\begin{prop} \label{T:detpp_Bernoulli}
  Let $K:\Lambda\times\Lambda\to\C$ be a kernel on a locally compact
  Polish space $\Lambda$ such that the
  corresponding integral operator $\mathcal{K}:L^2(\mu)\to L^2(\mu)$
  defined by
  \[
  \mathcal{K}(f)(x)=\int K(x,y) f(y) \ d\mu(y)
  \]
  is self-adjoint, nonnegative, and locally trace-class with
  eigenvalues in $[0,1]$.  For $D\subseteq\Lambda$ measurable, let $K_D(x,y) =
  \ind{D}(x) K(x,y) \ind{D}(y)$ be the restriction of $K$ to $D$.
  Suppose that
  $D$ is such that $K_D$ is trace-class;
  denote by $\{\lambda_k\}_{k\in \mathcal{A}}$ the eigenvalues of the corresponding operator
  $\mathcal{K}_D$ on $L^2(D)$ ($\mathcal{A}$ may be finite or
  countable) and denote by $\mathcal{N}_D$ the number
  of particles of the determinantal point process with kernel $K$
  which lie in $D$.  Then
  \[
  \mathcal{N}_D \overset{d}{=} \sum_{k\in\mathcal{A}} \xi_k,
  \]
  where ``$\overset{d}{=}$'' denotes equality in distribution and the
  $\xi_k$ are independent Bernoulli random variables with
  $\Prob[\xi_k=1] = \lambda_k$ and $\Prob[\xi_k = 0] = 1 - \lambda_k$.
\end{prop}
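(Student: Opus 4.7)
The plan is to compare the probability generating functions of $\mathcal{N}_D$ and $\sum_{k\in\mathcal{A}}\xi_k$, show they agree, and invoke uniqueness of generating functions for nonnegative integer-valued random variables. Since determinantal point processes are characterized by their $k$-point correlation functions $\rho_k(x_1,\ldots,x_k) = \det[K(x_i,x_j)]_{i,j=1}^k$, this is the natural route.

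First, I would express the factorial moments of $\mathcal{N}_D$ in terms of the correlation functions,
\[
\E \binom{\mathcal{N}_D}{k} = \frac{1}{k!}\int_{D^k} \det[K_D(x_i,x_j)]_{i,j=1}^k \, d\mu(x_1)\cdots d\mu(x_k),
\]
and sum the resulting series to obtain the factorial moment generating function as a Fredholm determinant expansion:
\[
\E\bigl[(1+t)^{\mathcal{N}_D}\bigr] = \sum_{k=0}^\infty \frac{t^k}{k!}\int_{D^k}\det[K_D(x_i,x_j)] \, d\mu^k = \det(I + t\mathcal{K}_D).
\]
The trace-class hypothesis on $\mathcal{K}_D$ is what justifies both the convergence of the series and its identification with the Fredholm determinant.

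Next, since $\mathcal{K}_D$ is self-adjoint, nonnegative, and trace-class, the spectral theorem provides an orthonormal eigenbasis with eigenvalues $\{\lambda_k\}_{k\in\mathcal{A}}$ satisfying $\sum_k \lambda_k = \tr\mathcal{K}_D < \infty$, and the Fredholm determinant factors as
\[
\det(I+t\mathcal{K}_D) = \prod_{k\in\mathcal{A}}(1+t\lambda_k).
\]
On the other hand, for independent $\xi_k\sim\mathrm{Bernoulli}(\lambda_k)$ one has $\E[(1+t)^{\xi_k}] = 1+t\lambda_k$, hence
\[
\E\bigl[(1+t)^{\sum_k \xi_k}\bigr] = \prod_{k\in\mathcal{A}}(1+t\lambda_k)
\]
by independence. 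The two generating functions coincide, and equality in distribution follows.

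The main obstacle is technical rather than conceptual: one must justify each manipulation above in the stated generality, in particular when $\mathcal{A}$ is countably infinite. The trace-class assumption on $\mathcal{K}_D$ is used to guarantee that the infinite product $\prod_k(1+t\lambda_k)$ and the factorial moment series both converge (as entire functions of $t$), that they agree term by term via the expansion of the Fredholm determinant in principal $k\times k$ minors, and that $\E \mathcal{N}_D = \tr\mathcal{K}_D < \infty$ so that $\mathcal{N}_D$ is almost surely finite and its law is determined by its generating function. All of these are standard facts about trace-class operators and Fredholm determinants, so the proof essentially reduces to assembling them with the correlation-function characterization of determinantal point processes.
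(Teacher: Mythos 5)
The paper does not prove this proposition itself: it is cited from \cite{HKPV06} (see also Corollary 4.2.24 of \cite{AGZ}), so there is no in-house proof to compare against. Your argument is, in fact, the standard proof found in those sources. The chain of identities
\[
\E\bigl[(1+t)^{\mathcal{N}_D}\bigr]
  = \sum_{k\ge0}\frac{t^k}{k!}\int_{D^k}\det[K_D(x_i,x_j)]\,d\mu^k
  = \det(I+t\mathcal{K}_D)
  = \prod_{k\in\mathcal{A}}(1+t\lambda_k)
\]
is exactly how the result is established, and comparing with the product form of the generating function of $\sum_k\xi_k$ closes the loop.

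Two points where you should make the technical scaffolding explicit rather than gesture at it. First, the interchange of expectation and sum: for $t\ge 0$ it is immediate by Tonelli, and that one-sided identity already suffices once you argue that the factorial moments determine the law. Second, that determination step: since the $\lambda_k\in[0,1]$ with $\sum_k\lambda_k=\tr\mathcal{K}_D<\infty$, one has
\[
\E\binom{\mathcal{N}_D}{k}=e_k(\lambda_1,\lambda_2,\dots)\le\frac{(\tr\mathcal{K}_D)^k}{k!},
\]
so the moment problem is determinate (Carleman), and moreover $\mathcal{N}_D<\infty$ a.s.\ because $\E\mathcal{N}_D=\tr\mathcal{K}_D<\infty$. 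With these two remarks filled in, your proof is complete and correct, and it takes the same route as the reference the paper relies on.
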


\medskip

In order to treat powers of uniform random matrices, we will
make use of the following elegant result of Rains.  For simplicity of
exposition, we will restrict attention for now to the unitary group,
and discuss in Section \ref{S:other_groups} the straightforward
modifications needed to treat the other classical compact groups.

\begin{prop}[Rains, \cite{Ra03}]\label{T:Rains}
  Let $m\le N$ be fixed.  If $\sim$ denotes equality of
  eigenvalue distributions, then
  \[
  \Unitary{N}^m \sim \bigoplus_{0\le
    j<m} \Unitary{\left\lceil\frac{N-j}{m}\right\rceil}
  \] 
  That is, if $U$ is a uniform $N\times N$ unitary matrix, the
  eigenvalues of $U^m$ are distributed as those of $m$ independent
  uniform unitary matrices of sizes $\left\lfloor \frac{N}{m}
  \right\rfloor:=\max\left\{k\in\N\mid k\le\frac{N}{m}\right\}$ and
  $\left\lceil \frac{N}{m} \right\rceil:=\min\left\{k\in\N\mid k\ge\frac{N}{m}\right\}$, such that
  the sum of the sizes of the matrices is $N$.
\end{prop}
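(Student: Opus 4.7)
The plan is to verify the claimed equality of eigenvalue distributions directly via the Weyl integration formula on $\Unitary{N}$: push forward the Weyl density under the $m$-to-$1$ map $\theta\mapsto m\theta\bmod 2\pi$ and recognise the resulting density as that of the proposed direct sum. Concretely, starting from the joint density $\frac{1}{N!(2\pi)^N}|\Delta(e^{i\theta})|^2\,d\theta$ of the eigenvalue angles on $[0,2\pi)^N$ (with $\Delta$ the Vandermonde), for an arbitrary symmetric test function $g$ of $N$ variables I would substitute $\theta_j=(\phi_j+2\pi l_j)/m$ with $\phi_j\in[0,2\pi)$ and $l_j\in\{0,\ldots,m-1\}$. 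This converts $\E g(\text{eigs of }U^m)$ into an integral against $g(e^{i\phi})$ weighted by
\[
\frac{1}{m^N}\sum_{\mathbf{l}\in\{0,\ldots,m-1\}^N}\bigl|\Delta\bigl(e^{i\phi_j/m}\omega^{l_j}\bigr)\bigr|^2,\qquad\omega=e^{2\pi i/m},
\]
and the task is to recognise this weight as a rescaled Weyl density of the appropriate block diagonal group.

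The first algebraic step is to expand $|\Delta|^2$ using the Slater identity $\Delta(z)=\sum_{\sigma\in S_N}\sgn(\sigma)\prod_j z_j^{\sigma(j)-1}$ as a double sum over $(\sigma,\tau)\in S_N\times S_N$. The sum over $\mathbf{l}$ then factorises into $\prod_j\sum_{l=0}^{m-1}\omega^{(\sigma(j)-\tau(j))l}$, and each factor equals $m$ when $\sigma(j)\equiv\tau(j)\pmod m$ and vanishes otherwise. Only pairs whose values agree pointwise modulo $m$ survive, and for those the residual exponents $(\sigma(j)-\tau(j))/m$ are honest integers, so the $w_j=e^{i\phi_j}$ dependence becomes single-valued.

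The main obstacle is the combinatorial bookkeeping needed to factor the surviving double sum into a product over blocks. I would organise the surviving pairs by the set partition $\mathcal{P}=(S_0,\ldots,S_{m-1})$ of $\{1,\ldots,N\}$ they induce through residue classes, $S_r=\{j:\sigma(j)\equiv r+1\pmod m\}$; the block sizes are forced to be $N_r=\lceil(N-r)/m\rceil$. For each fixed $\mathcal{P}$, writing any compatible $\sigma$ as a baseline bijection $\sigma_\mathcal{P}$ (sending the ordered elements of $S_r$ to the ordered elements of $R_r=\{k:k\equiv r+1\pmod m\}$) composed with independent permutations $\alpha_r$ of the blocks $R_r$, the sign factorises as $\sgn(\sigma)=\epsilon(\mathcal{P})\prod_r\sgn(\alpha_r)$. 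The prefactor $\epsilon(\mathcal{P})$ squares out, and the sums over $\alpha_r,\beta_r$ assemble into a product of honest squared Vandermondes in the $w_j$ themselves:
\[
\sum_{\sigma,\tau\text{ comp.\ with }\mathcal{P}}\sgn(\sigma)\sgn(\tau)\prod_j w_j^{(\sigma(j)-\tau(j))/m}=\prod_{r=0}^{m-1}\bigl|\Delta\bigl((w_j)_{j\in S_r}\bigr)\bigr|^2.
\]
Verifying the sign factorisation and confirming that the exponents collapse to produce a Vandermonde in $w_j$ (not in $w_j^{1/m}$) is the step where the care lies.

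Finally I would sum over all partitions of $\{1,\ldots,N\}$ with these block sizes, using the symmetry of $g$ together with invariance of Lebesgue measure under relabelling of variables to collapse the $N!/\prod_r N_r!$ identical contributions to a single fixed partition $\mathcal{P}^*=(S_0^*,\ldots,S_{m-1}^*)$. The prefactor $1/(N!(2\pi)^N m^N)$ then combines with the compensating factor $m^N$ produced by the sum over $\mathbf{l}$ and with the multinomial coefficient to give
\[
\E g(\text{eigs of }U^m)=\int_{[0,2\pi)^N} g(w)\prod_{r=0}^{m-1}\frac{|\Delta((w_j)_{j\in S_r^*})|^2}{N_r!(2\pi)^{N_r}}\,d\phi,
\]
which is precisely the joint eigenvalue density of $\bigoplus_r V_r$ for independent Haar samples $V_r\in\Unitary{N_r}$. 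Since $g$ was an arbitrary symmetric function, this yields the claimed equality of eigenvalue distributions.
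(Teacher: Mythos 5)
The paper does not prove this proposition; it is imported as an attributed result of Rains, with \cite{Ra03} as the reference. There is therefore no in-paper argument to compare against, and I will instead assess your proof on its own terms.

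Your argument is correct. The substitution $\theta_j = (\phi_j + 2\pi\ell_j)/m$ with $\phi_j \in [0,2\pi)$, $\ell_j \in \{0,\dots,m-1\}$ is a genuine bijection onto $[0,2\pi)$ with constant Jacobian $1/m$, and $e^{im\theta_j}=e^{i\phi_j}$, so the push-forward of the Weyl density under $\theta\mapsto m\theta$ does take the stated form. Expanding $|\Delta|^2$ by the Leibniz formula and summing over $\mathbf{\ell}$, each factor $\sum_{\ell=0}^{m-1}\omega^{\ell(\sigma(j)-\tau(j))}$ equals $m\cdot\ind{\{\sigma(j)\equiv\tau(j)\bmod m\}}$, which both kills the fractional powers of $w_j$ (the exponent $(\sigma(j)-\tau(j))/m$ becomes an integer) and cancels the $1/m^N$ prefactor. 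I checked the step you flag as delicate: with $R_r = \{k\le N : k\equiv r+1 \bmod m\}$ and $j\in S_r$ the $i$-th smallest element, writing $\sigma(j)=r+1+m(\pi_r(i)-1)$, $\tau(j)=r+1+m(\rho_r(i)-1)$ gives $(\sigma(j)-\tau(j))/m=\pi_r(i)-\rho_r(i)$; the sign factorizes as $\sgn(\sigma)\sgn(\tau)=\sgn(\sigma_{\mathcal{P}})^2\prod_r\sgn(\pi_r)\sgn(\rho_r)$ with the baseline sign squaring away; and for unimodular $w$ one has $\sum_{\pi,\rho}\sgn(\pi)\sgn(\rho)\prod_i w_i^{\pi(i)-\rho(i)}=|\Delta(w)|^2$, so the surviving double sum does factor block-wise. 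Finally the $N!/\prod_r N_r!$ set partitions all give the same integral by symmetry of $g$ and of Lebesgue measure, which together with $\prod_r(2\pi)^{N_r}=(2\pi)^N$ reproduces exactly the product of Weyl densities for independent $\Unitary{N_r}$'s. The block sizes $N_r=\lceil(N-r)/m\rceil$ sum to $N$ and each equals $\lfloor N/m\rfloor$ or $\lceil N/m\rceil$, matching the statement. This is a clean, self-contained derivation by direct Weyl integration; it gives for the unitary case exactly the decomposition that the paper (and Section \ref{S:other_groups}) imports from \cite{Ra03}.
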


\medskip

\begin{cor}\label{T:power_counting_function}
  Let $1 \le m \le N$, and let $U \in \Unitary{n}$ be uniformly
  distributed.  For $\theta \in [0, 2\pi)$, denote by
  $\mathcal{N}^{(m)}_\theta$ the number of eigenvalue angles of $U^m$ which
  lie in $[0, \theta)$.  Then $\mathcal{N}^{(m)}_\theta$ is equal in
  distribution to a sum of independent Bernoulli random variables.
  Consequently, for each $t > 0$,
  \begin{equation}\label{E:Bernstein}
    \Prob\left[\abs{\mathcal{N}^{(m)}_\theta - \E \mathcal{N}^{(m)}_\theta} > t \right]
    \le 2 \exp \left(-\min\left\{\frac{t^2}{4\sigma^2},
        \frac{t}{2}\right\}\right),
  \end{equation}
  where $\sigma^2 = \var \mathcal{N}^{(m)}_\theta$.
\end{cor}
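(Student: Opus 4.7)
The plan is to chain together the three tools just established: Rains's representation expresses $U^m$ as (in distribution) a direct sum of independent smaller uniform unitary matrices; the eigenvalue angles of each block form a determinantal point process; and counting statistics of determinantal point processes are sums of independent Bernoullis. A classical Bernstein inequality then delivers the tail bound.

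First, by Proposition \ref{T:Rains}, the eigenvalues of $U^m$ are distributed as the disjoint union of the eigenvalues of $m$ independent uniform unitary matrices $V_0, \ldots, V_{m-1}$, with $V_j \in \Unitary{\lceil (N-j)/m \rceil}$. Writing $\mathcal{N}^{(j)}_\theta$ for the number of eigenvalue angles of $V_j$ lying in $[0,\theta)$, the direct-sum structure and independence of the $V_j$ give
\[
\mathcal{N}^{(m)}_\theta \overset{d}{=} \sum_{j=0}^{m-1} \mathcal{N}^{(j)}_\theta,
\]
with the summands independent.

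Second, by the $\Unitary{N}$ row of Proposition \ref{T:evs_detpp}, each $V_j$ has its eigenvalue angles given by a determinantal point process on $[0,2\pi)$ whose kernel is self-adjoint with spectrum in $[0,1]$. Applying Proposition \ref{T:detpp_Bernoulli} with $D=[0,\theta)$ represents $\mathcal{N}^{(j)}_\theta$ as a sum of independent Bernoullis (with parameters the eigenvalues of the corresponding restricted integral operator). Assembling these representations across $j$ and using independence of the blocks shows that $\mathcal{N}^{(m)}_\theta$ is itself equal in distribution to a sum of independent Bernoulli random variables.

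Third, the concentration inequality \eqref{E:Bernstein} is a direct consequence of the classical Bernstein inequality for a sum $S = \sum_k \xi_k$ of independent $\{0,1\}$-valued random variables. Each centered summand lies in $[-1,1]$, so
\[
\Prob\bigl[\abs{S - \E S} > t\bigr] \le 2\exp\!\left(-\frac{t^2}{2(\sigma^2 + t/3)}\right),
\]
where $\sigma^2 = \var S$. If $t \le 3\sigma^2$ the denominator is at most $4\sigma^2$, so the exponent is at least $t^2/(4\sigma^2)$; if $t > 3\sigma^2$ the denominator is at most $4t/3$, so the exponent is at least $3t/4 \ge t/2$. In either regime the exponent dominates $\min\{t^2/(4\sigma^2), t/2\}$, yielding \eqref{E:Bernstein}. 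No single step is delicate; the only nontrivial work is locating the appropriate Bernstein bound and checking that its two regimes combine into the stated minimum.
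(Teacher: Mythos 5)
Your proposal matches the paper's proof in both structure and substance: decompose via Rains's result into $m$ independent smaller-rank counting functions, apply Propositions \ref{T:evs_detpp} and \ref{T:detpp_Bernoulli} to each block to get a sum of independent Bernoullis, and finish with Bernstein's inequality. The only difference is cosmetic — you spell out how the stated exponent $\min\{t^2/4\sigma^2,\, t/2\}$ follows from the classical Bernstein bound $2\exp\bigl(-t^2/\bigl(2(\sigma^2 + t/3)\bigr)\bigr)$, whereas the paper simply cites Talagrand; your two-regime calculation is correct.
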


\begin{proof}
  By Proposition \ref{T:Rains}, $\mathcal{N}^{(m)}_\theta$ is equal to
  the sum of $m$ independent random variables $X_i$, $1 \le i \le m$,
  which count the number of eigenvalue angles of smaller-rank
  uniformly distributed unitary matrices which lie in the interval.
  Propositions \ref{T:evs_detpp} and \ref{T:detpp_Bernoulli} together
  imply that each $X_i$ is equal in distribution to a sum of
  independent Bernoulli random variables, which completes the proof of
  the first claim. The inequality \eqref{E:Bernstein} then follows
  immediately from Bernstein's inequality \cite[Lemma
  2.7.1]{Talagrand}.
\end{proof}

\medskip

\section{Means and variances}\label{S:meansvars}

In order to apply \eqref{E:Bernstein}, it is necessary to estimate the
mean and variance of the eigenvalue counting function
$\mathcal{N}^{(m)}_\theta$.  As in the proof of Corollary
\ref{T:power_counting_function}, this reduces by Proposition
\ref{T:Rains} to considering the case $m = 1$.  Asymptotics for these
quantities have been stated in the literature before, e.g.\ in
\cite{So00}, but not with the uniformity in $\theta$ which is needed
below, so we indicate one approach to the proofs.  A different
approach yielding very precise asymptotics was carried out by Rains
\cite{Ra97} for the unitary group; we use the approach outlined below
because it generalizes easily to all of the other groups and cosets.

For this purpose we again make use of the fact that the eigenvalue
distributions of these random matrices are determinantal point
processes.  It is more convenient for the variance estimates to use
here an alternative representation to the one stated in Proposition
\ref{T:evs_detpp} (which is more convenient for verifying the
hypotheses of Proposition \ref{T:detpp_Bernoulli} and for the mean
estimates).  First define
\[
S_N(x):= \begin{cases} \sin\left(\frac{Nx}{2}\right) /
  \sin\left(\frac{x}{2}\right) & \text{ if } x \neq 0, \\
  N & \text{ if } x = 0.\end{cases}
\]
The following result essentially summarizes \cite[Section 5.4]{KaSa}.
(Note that in the unitary case, the kernels given in Propositions
\ref{T:evs_detpp} and \ref{T:evs_detpp2} are not actually equal, but
they generate the same process).

\begin{prop} \label{T:evs_detpp2} The nontrivial eigenvalue angles of
  uniformly distributed random matrices in any of $\SOrthogonal{N}$,
  $\SOneg{N}$, $\Unitary{N}$, $\Symplectic{N}$ are a determinantal
  point process, with respect to uniform measure on $\Lambda$, with
  kernels as follows.

  \begin{center}
    \begin{tabular}{|c|c|c|}\hline
      & $K_N(x,y)$ & $\Lambda$ \\
      \hline\hline
      $\SOrthogonal{2N}$ 
      & $\ds \frac{1}{2}\Bigl(S_{2N-1}(x-y)+S_{2N-1}(x+y)\Bigr)\phantom{\Bigg|}$&$[0,\pi)$ \\
      \hline
      $\SOrthogonal{2N+1},\SOneg{2N+1}$ 
      & $\ds\frac{1}{2}\Bigl(S_{2N}(x-y)-S_{2N}(x+y)\Bigr) \phantom{\Bigg|}$
      & $[0,\pi)$ \\
      \hline
      $ \Unitary{N}$ & $\ds S_N(x-y)\phantom{\Big|}$ & $[0,2\pi)$ \\
      \hline
      $ \Symplectic{N},\SOneg{2N+2}$
      & $\ds\frac{1}{2}\Bigl(S_{2N+1}(x-y)-S_{2N+1}(x+y)\Bigr) \phantom{\Bigg|}$
      & $[0,\pi)$ \\
      \hline
    \end{tabular}
  \end{center} 
\end{prop}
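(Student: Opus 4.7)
My plan is to deduce Proposition~\ref{T:evs_detpp2} from Proposition~\ref{T:evs_detpp} by showing, row by row, that the new kernel and the old one produce the same $n$-point correlation determinants and hence determine the same determinantal process on $\Lambda$. For the $\SOrthogonal{N}$, $\SOneg{N}$, and $\Symplectic{N}$ rows I expect the two kernels to be literally equal after some elementary trigonometry; for $\Unitary{N}$ the parenthetical remark in the statement already flags that equality fails, and the two kernels will instead differ by a unimodular gauge factor.

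For the non-unitary rows, the idea is to apply the product-to-sum identities $2\cos a\cos b = \cos(a-b)+\cos(a+b)$ and $2\sin a\sin b = \cos(a-b)-\cos(a+b)$ to rewrite each kernel from Proposition~\ref{T:evs_detpp} as a linear combination of cosines in $x-y$ and $x+y$, and then recognize the resulting partial sums as values of $S_M$ via the standard Dirichlet-type identities
\[
S_{2N-1}(\alpha) = 1 + 2\sum_{j=1}^{N-1}\cos(j\alpha),\qquad
S_{2N+1}(\alpha) = 1 + 2\sum_{j=1}^{N}\cos(j\alpha),\qquad
S_{2N}(\alpha) = 2\sum_{j=0}^{N-1}\cos\!\bigl(\tfrac{(2j+1)\alpha}{2}\bigr).
\]
Substituting $\alpha = x\pm y$ and combining produces exactly the expressions $\tfrac{1}{2}\bigl(S_M(x-y)\pm S_M(x+y)\bigr)$ in Proposition~\ref{T:evs_detpp2}. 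The constant $1$ in the $\SOrthogonal{2N}$ kernel is accounted for by $\tfrac12+\tfrac12$ from the two copies of $S_{2N-1}$, while in the $\Symplectic{N}$ and $\SOneg{2N+2}$ row the $1+1$ cancels in the difference; the signs in front of $S_M(x+y)$ always match those produced by the $\cos b$/$\sin b$ substitution, so no further bookkeeping is needed.

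For $\Unitary{N}$, summing the geometric series gives
\[
\sum_{j=0}^{N-1} e^{ij(x-y)} = e^{i(N-1)(x-y)/2}\,S_N(x-y) = f(x)\overline{f(y)}\,S_N(x-y)
\]
with $f(x) := e^{i(N-1)x/2}$ unimodular. The key observation is the gauge invariance of determinantal correlation functions: for any $x_1,\dots,x_n\in[0,2\pi)$, expanding by Leibniz gives
\[
\det\bigl[f(x_k)\overline{f(x_l)}\,S_N(x_k - x_l)\bigr]_{k,l=1}^n
= \lvert f(x_1)\cdots f(x_n)\rvert^2\,\det\bigl[S_N(x_k - x_l)\bigr]_{k,l=1}^n
= \det\bigl[S_N(x_k - x_l)\bigr]_{k,l=1}^n,
\]
since the prefactors $f(x_k)$ and $\overline{f(x_{\sigma(k)})}$ coming from row $k$ and column $\sigma(k)$ reassemble, as $\sigma$ ranges over $S_n$, into the $\sigma$-independent unimodular quantity $\lvert\prod_k f(x_k)\rvert^2 = 1$. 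Thus the two kernels produce identical joint intensities and so, by Proposition~\ref{T:evs_detpp}, $S_N(x-y)$ is indeed a kernel for the same determinantal process.

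The main obstacle, such as it is, is purely bookkeeping: keeping the signs, ranges of summation, and half-integer shifts consistent across the four rows. With Proposition~\ref{T:evs_detpp} (essentially \cite[Section 5.2]{KaSa}) in hand, no new probabilistic or representation-theoretic input is required, and the content of Proposition~\ref{T:evs_detpp2} is exactly that the computationally more convenient $S_M$-kernels are legitimate alternative representations of the same DPPs.
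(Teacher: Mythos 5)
Your proposal is correct, and it fills in a derivation the paper elides: the paper states Proposition~\ref{T:evs_detpp2} with only a citation to \cite[Section~5.4]{KaSa}, and its parenthetical remark about the unitary case (``not actually equal, but they generate the same process'') is precisely the gauge-factor observation you make explicit. Deducing Proposition~\ref{T:evs_detpp2} from Proposition~\ref{T:evs_detpp} is a legitimate alternative to re-citing Katz--Sarnak, and your bookkeeping checks out: the Dirichlet-kernel identities $S_{2M+1}(\alpha)=1+2\sum_{j=1}^M\cos(j\alpha)$ and $S_{2M}(\alpha)=2\sum_{j=0}^{M-1}\cos\bigl(\tfrac{(2j+1)\alpha}{2}\bigr)$ are correctly stated, and combining them with $2\cos a\cos b=\cos(a-b)+\cos(a+b)$ and $2\sin a\sin b=\cos(a-b)-\cos(a+b)$ does reproduce the three non-unitary kernels exactly (including the $\tfrac12+\tfrac12$ in the even orthogonal case and the cancellation of the constant $1$ in the symplectic/$\SOneg{2N+2}$ case). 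For $\Unitary{N}$, the geometric-series evaluation $\sum_{j=0}^{N-1}e^{ij\alpha}=e^{i(N-1)\alpha/2}S_N(\alpha)$ and the Leibniz-expansion argument that $\det[f(x_k)\overline{f(x_l)}K(x_k,x_l)]=\bigl|\prod_k f(x_k)\bigr|^2\det[K(x_k,x_l)]$ for unimodular $f$ is exactly the standard gauge-equivalence of DPP kernels, and since correlation functions determine the process, this completes that row. One small stylistic point: you could also note that the real, symmetric kernel $S_N(x-y)$ is the more convenient representative for verifying the self-adjointness and projection hypotheses of Lemma~\ref{T:kernel_mean_var}, which is the reason the paper introduces it.
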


\medskip

The following lemma is easy to check using Proposition
\ref{T:detpp_Bernoulli}.  For the details of the variance expression,
see \cite[Appendix B]{Gu}.

\begin{lemma} \label{T:kernel_mean_var} Let $K:I \times I \to \R$ be a
  continuous kernel on an interval $I$ representing an orthogonal
  projection operator on $L^2(\mu)$, where $\mu$ is the uniform
  measure on $I$. For a subinterval $D \subseteq I$, denote by
  $\mathcal{N}_D$ the number of particles of the determinantal point
  process with kernel $K$ which lie in $D$. Then
  \[
  \E \mathcal{N}_D = \int_D K(x, x) \ d\mu(x)
  \]
  and
  \[
  \var \mathcal{N}_D = \int_D \int_{I \setminus D} K(x, y)^2 \
  d\mu(x) \ d\mu(y).
  \]
\end{lemma}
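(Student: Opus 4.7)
The plan is to feed this setup directly into Proposition \ref{T:detpp_Bernoulli}. Because $\mathcal{K}$ is an orthogonal projection, all its eigenvalues are $0$ or $1$, so in particular they lie in $[0,1]$; continuity of $K$ on the interval $I$ with $D$ a bounded subinterval gives that $K_D$ is trace-class (its operator is the compression of a bounded projection to the range of multiplication by $\ind{D}$). Proposition \ref{T:detpp_Bernoulli} then represents $\mathcal{N}_D \stackrel{d}{=} \sum_k \xi_k$, where $\xi_k$ are independent Bernoullis with parameters $\lambda_k$ equal to the eigenvalues of $\mathcal{K}_D$.

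For the mean, independence is not needed: $\E \mathcal{N}_D = \sum_k \lambda_k = \tr(\mathcal{K}_D)$, and for a continuous kernel on an interval the trace is computed by integrating the diagonal, giving $\E \mathcal{N}_D = \int_D K(x,x)\,d\mu(x)$.

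For the variance, independence of the $\xi_k$'s gives
\[
\var \mathcal{N}_D = \sum_k \lambda_k(1 - \lambda_k) = \tr(\mathcal{K}_D) - \tr(\mathcal{K}_D^2).
\]
The key input is the projection identity $\mathcal{K}^2 = \mathcal{K}$, which at the level of kernels reads $\int_I K(x,z) K(z,y)\,d\mu(z) = K(x,y)$; combined with self-adjointness and the fact that $K$ is real-valued (so $K(x,z) = K(z,x)$), this gives $K(x,x) = \int_I K(x,z)^2\,d\mu(z)$. Hence
\[
\tr(\mathcal{K}_D) = \int_D K(x,x)\,d\mu(x) = \int_D \int_I K(x,y)^2\,d\mu(y)\,d\mu(x),
\]
while a direct computation of $\tr(\mathcal{K}_D^2) = \int K_D \circ K_D(x,x)\,d\mu(x)$ yields
\[
\tr(\mathcal{K}_D^2) = \int_D \int_D K(x,y)^2\,d\mu(y)\,d\mu(x).
\]
Subtracting gives $\var \mathcal{N}_D = \int_D \int_{I \setminus D} K(x,y)^2\,d\mu(x)\,d\mu(y)$, as claimed.

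There is no real obstacle here: the argument is a bookkeeping exercise once one knows the Bernoulli representation and the projection identity $K(x,x) = \int_I K(x,y)^2\,d\mu(y)$. The only thing that needs even a moment's care is justifying that $\tr(\mathcal{K}_D)$ really equals the integral of $K(x,x)$ over $D$ (rather than over $I$) and that $K_D$ is trace-class, both of which follow from continuity of $K$ and boundedness of $D$; for the rest of the detailed bookkeeping one may refer, as the authors do, to \cite[Appendix B]{Gu}.
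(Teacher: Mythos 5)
Your proof is correct and follows exactly the route the paper gestures at: the paper gives no argument of its own, just a pointer to Proposition~\ref{T:detpp_Bernoulli} and to \cite[Appendix~B]{Gu}, and what you've written is precisely the computation those pointers are meant to invoke --- the Bernoulli representation gives $\E\mathcal{N}_D=\tr\mathcal{K}_D$ and $\var\mathcal{N}_D=\tr\mathcal{K}_D-\tr\mathcal{K}_D^2$, and the reproducing identity $K(x,x)=\int_I K(x,z)^2\,d\mu(z)$ for a real self-adjoint projection kernel turns the difference into the claimed double integral over $D\times(I\setminus D)$.
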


\medskip

\begin{prop} \label{T:means}
  \begin{enumerate}
  \item Let $U$ be uniform in $\Unitary{N}$.  For $\theta \in
    [0,2\pi)$, let $\mathcal{N}_\theta$ be the number of eigenvalues
    angles of $U$ in $[0, \theta)$. Then
    \[
    \E \mathcal{N}_\theta = \frac{N\theta}{2\pi}.
    \]
  \item Let $U$ be uniform in one of $\SOrthogonal{2N}$,
    $\SOneg{2N+2}$, $\SOrthogonal{2N+1}$, $\SOneg{2N+1}$, or
    $\Symplectic{N}$.  For $\theta \in [0,\pi)$, let
    $\mathcal{N}_\theta$ be the number of nontrivial eigenvalue angles
    of $U$ in $[0, \theta)$.  Then
    \[
    \abs{\E \mathcal{N}_\theta - \frac{N\theta}{\pi}}
    < 1.
    \]
   \end{enumerate}
\end{prop}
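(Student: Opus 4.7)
The plan is to compute each expectation directly using Lemma \ref{T:kernel_mean_var} together with the kernels in Proposition \ref{T:evs_detpp}. With $\mu$ denoting uniform probability measure on $\Lambda$, we have
\[
\E \mathcal{N}_\theta = \int_0^\theta K_N(x,x)\,d\mu(x),
\]
so in every case the task reduces to computing the diagonal $K_N(x,x)$, integrating, and controlling the error from the oscillatory piece.

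For part (1), the unitary kernel gives $K_N(x,x) = \sum_{j=0}^{N-1} e^{ij(x-x)} = N$, and since $d\mu = \frac{1}{2\pi}\,dx$ on $[0,2\pi)$, we get $\E \mathcal{N}_\theta = \frac{N\theta}{2\pi}$ exactly.

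For part (2), the remaining kernels all produce, via the identities $2\cos^2 u = 1+\cos(2u)$ and $2\sin^2 u = 1-\cos(2u)$, a diagonal of the form $K_N(x,x) = N + \sum_k \eps_k \cos(\alpha_k x)$ with frequencies $\alpha_k$ equal to the even integers $2,4,\ldots,2(N-1)$ in the $\SOrthogonal{2N}$ case, the odd integers $1,3,\ldots,2N-1$ in the $\SOrthogonal{2N+1}$ and $\SOneg{2N+1}$ cases, and $2,4,\ldots,2N$ in the symplectic/$\SOneg{2N+2}$ case, with appropriate signs $\eps_k \in \{\pm 1\}$. Integrating against $\frac{1}{\pi}\,dx$ on $[0,\pi)$ gives
\[
\E \mathcal{N}_\theta = \frac{N\theta}{\pi} + \frac{1}{\pi}\sum_k \frac{\eps_k \sin(\alpha_k \theta)}{\alpha_k},
\]
so the claim reduces to showing that the error sum has absolute value less than $\pi$.

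The step I expect to be the only nontrivial one is bounding this Fourier-type partial sum by a constant independent of $N$ and $\theta$. This follows from the classical estimate
\[
\sup_{M\ge 1,\ \phi\in\R}\left|\sum_{j=1}^M \frac{\sin(j\phi)}{j}\right| = \int_0^\pi \frac{\sin t}{t}\,dt \approx 1.852,
\]
which gives the bound for the even-frequency sums directly (after a substitution $\phi=2\theta$ and pulling out a factor of $\tfrac12$) and, for the odd-frequency sums, via the standard decomposition $\sum_{j\ \mathrm{odd},\ j\le M}\frac{\sin(j\phi)}{j} = \sum_{j=1}^M \frac{\sin(j\phi)}{j} - \tfrac{1}{2}\sum_{j=1}^{\lfloor M/2\rfloor}\frac{\sin(2j\phi)}{j}$. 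Either way, the total error is at most a small absolute multiple of this Gibbs-type constant, and after division by $\pi$ it is comfortably below $1$, yielding the stated inequality.
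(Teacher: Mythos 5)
Your argument is correct and follows the same skeleton as the paper's proof: apply Lemma \ref{T:kernel_mean_var} with the kernels of Proposition \ref{T:evs_detpp}, use $\sin^2$/$\cos^2$ double-angle identities to write the diagonal as $N$ plus a sum of cosines of multiples of $x$, integrate, and reduce to a uniform bound on partial sums of the form $\sum_j \sin(j\phi)/j$. You verify the unitary case identically. Where you depart from the paper is in the final estimate. You invoke the classical Gronwall/Gibbs result
\[
\sup_{M\ge 1,\ \phi\in\R}\Bigl|\sum_{j=1}^M \frac{\sin(j\phi)}{j}\Bigr| = \int_0^\pi \frac{\sin t}{t}\,dt \approx 1.852,
\]
together with the odd-even splitting to handle $\sum_{j\ \mathrm{odd}}\sin(j\theta)/j$; your worst case comes out to about $\tfrac{3}{2\pi}(1.852) \approx 0.88 < 1$. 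The paper instead proves the needed bound from scratch by Abel summation: it sets $a_j = \sum_{k=1}^j \sin(2k\theta) = \sin((j+1)\theta)\sin(j\theta)/\sin\theta$ and directly bounds $\sum_j |a_j|/j(j+1)$ by $\pi$, giving the slightly better margin $\tfrac{1+\pi}{2\pi} \approx 0.66$ for the symplectic case. Your route is shorter if the reader already knows Gronwall's theorem (which in turn rests on the Fej\'er--Jackson positivity); the paper's is self-contained and elementary, which matters somewhat since the argument is repeated across several cosets. Both are valid and give comfortable room under the required bound of $1$.
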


\begin{proof}
  The equality for the unitary group follows from symmetry
  considerations, or immediately from Proposition \ref{T:evs_detpp2}
  and Lemma \ref{T:kernel_mean_var}.

  In the case of $\Symplectic{N}$ or $\SOneg{2N+2}$, by Proposition
  \ref{T:evs_detpp} and Lemma \ref{T:kernel_mean_var},
  \[
  \E \mathcal{N}_\theta 
  = \frac{1}{\pi} \int_0^\theta \sum_{j=1}^N 2 \sin^2(jx) \ dx
  = \frac{N\theta}{\pi} - \frac{1}{2\pi} 
  \sum_{j=1}^N \frac{\sin (2j\theta)}{j}.
  \]
  Define $a_0 = 0$ and $a_j = \sum_{k=1}^j \sin(2 k \theta)$.
  Then by summation by parts,
  \[
  \sum_{j=1}^N \frac{\sin(2 j\theta)}{j} 
  = \frac{a_N}{N} + \sum_{j=1}^{N-1} \frac{a_j}{j(j+1)}.
  \]
  Trivially, $\abs{a_N} \le N$. Now observe that
  \[
  a_j = \Im \left[ e^{2i\theta} \sum_{k=0}^{j-1} e^{2ik\theta} \right]
  = \Im \left[ e^{2 i\theta} \frac{e^{2ij \theta} - 1}{e^{2i\theta - 1}} \right] 
  = \Im \left[ e^{i (j+1) \theta} \frac{\sin(j\theta)}{\sin (\theta)} \right]
  = \frac{\sin ((j+1)\theta) \sin(j \theta)}{\sin (\theta)}.
  \]
  Since $a_j$ is invariant under the substitution $\theta \mapsto \pi
  - \theta$, it suffices to assume that $0 < \theta \le \pi/2$.  In that
  case $\sin (\theta) \ge 2\theta/\pi$, and so
  \[
    \sum_{j=1}^{N-1} \frac{\abs{a_j}}{j(j+1)}
    \le \frac{\pi}{2\theta} \left[\sum_{1 \le j \le 1/\theta} \theta^2 
      + \sum_{1/\theta < j \le N-1} \frac{1}{j(j+1)}\right]
    \le \frac{\pi}{2\theta} (\theta + \theta) = \pi.
  \]
  All together,
  \[
  \abs{\E \mathcal{N}_\theta - \frac{N\theta}{\pi}} \le \frac{1 +
    \pi}{2\pi}.
  \]
  The other cases are handled similarly.
\end{proof}

\medskip

As before, we restrict attention from now on to the unitary group, deferring
discussion of the other cases to Section \ref{S:other_groups}.

\begin{prop} \label{T:unitary_var} Let $U$ be uniform in
  $\Unitary{N}$.  For $\theta \in [0,2\pi)$, let $\mathcal{N}_\theta$
  be the number of eigenvalue angles of $U$ in $[0, \theta)$.  Then
  \[
  \var \mathcal{N}_\theta \le \log N + 1.
  \]
\end{prop}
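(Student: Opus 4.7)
The plan is to apply Lemma \ref{T:kernel_mean_var} together with the kernel $K_N(x,y) = S_N(x-y)$ from Proposition \ref{T:evs_detpp2} and bound the resulting integral by elementary estimates. Since $\mathcal{N}_\theta + \mathcal{N}_{[\theta,2\pi)} = N$ is deterministic and the eigenvalue distribution is rotation-invariant on $\Circle$, we have $\var \mathcal{N}_\theta = \var \mathcal{N}_{2\pi-\theta}$, so I may assume $\theta \in (0, \pi]$. Lemma \ref{T:kernel_mean_var}, with $\mu$ the normalized Lebesgue measure on $[0, 2\pi)$ and $D = [0,\theta)$, then gives
\[
\var \mathcal{N}_\theta = \frac{1}{4\pi^2} \int_0^\theta \int_\theta^{2\pi} S_N(y-x)^2 \, dy \, dx.
\]

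Because the integrand depends only on $u = y - x$, I would next change variables and compute the length of the slice for each $u$, rewriting this as $\frac{1}{4\pi^2} \int_0^{2\pi} \ell(u) \, S_N(u)^2 \, du$, where for $\theta \le \pi$ one has $\ell(u) = u$ on $(0,\theta]$, $\ell(u) = \theta$ on $[\theta, 2\pi-\theta]$, and $\ell(u) = 2\pi - u$ on $[2\pi-\theta, 2\pi)$. Both $\ell$ and $S_N^2$ are symmetric about $u = \pi$, so the integral equals twice its restriction to $(0, \pi)$.

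The key analytic input is the elementary bound $S_N(u)^2 \le \min\bigl(N^2, \pi^2/u^2\bigr)$ on $(0,\pi]$, which follows from $\abs{\sin(Nu/2)} \le 1$ combined with the lower bound $\sin(u/2) \ge u/\pi$ on $[0,\pi]$. I would then split the integral on $(0,\pi)$ at the threshold $u = \pi/N$: on $(0, \pi/N]$ use $S_N(u)^2 \le N^2$ together with $\ell(u) \le u$, giving a contribution bounded by $\pi^2/2$; on $(\pi/N, \theta]$ use $\ell(u) = u$ and $S_N(u)^2 \le \pi^2/u^2$, giving at most $\pi^2 \log N$; and on $(\theta, \pi]$ use $\ell(u) = \theta$ and the same pointwise bound on $S_N^2$, giving at most $\theta \pi^2 (1/\theta - 1/\pi) \le \pi^2$. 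Adding these and dividing by $2\pi^2$ yields a bound of the shape $\tfrac{1}{2}\log N + \tfrac{3}{4}$, comfortably within $\log N + 1$.

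There is no genuine obstacle; the only minor bookkeeping is the degenerate case $\theta < \pi/N$, in which the middle interval is empty and one uses only the first and third pieces (the first piece now running only up to $\theta$), which continues to yield a bound well below $\log N + 1$.
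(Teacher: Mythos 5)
Your argument is essentially the paper's proof of this proposition: same reduction to $\theta \le \pi$, same application of Lemma~\ref{T:kernel_mean_var} with the $S_N$ kernel of Proposition~\ref{T:evs_detpp2}, same change of variables to slice-length form and symmetrization about $u=\pi$, and the same split at an $O(1/N)$ threshold using the two elementary sine bounds (the paper splits at $1/N$ and uses $\sin(Nz/2)\le Nz/2$ near the origin, whereas you split at $\pi/N$ and use $\lvert S_N\rvert \le N$ --- a cosmetic difference). One small inaccuracy: you assert that $S_N(u)^2\le N^2$ ``follows from $\lvert\sin(Nu/2)\rvert\le 1$ combined with $\sin(u/2)\ge u/\pi$,'' but that pair of facts only gives the $\pi^2/u^2$ half of your minimum; the bound $\lvert S_N\rvert\le N$ instead follows from, e.g., the inequality $\lvert\sin(N\alpha)\rvert\le N\lvert\sin\alpha\rvert$, or from the identity $\lvert S_N(u)\rvert=\bigl\lvert\sum_{j=0}^{N-1}e^{iju}\bigr\rvert$. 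With that correction the argument is complete.
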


\begin{proof}
  If $\theta\in(\pi,2\pi)$, then
  $\mathcal{N}_\theta\overset{d}{=}N-\mathcal{N}_{2\pi-\theta}$, and
  so it suffices to assume that $\theta\le\pi$.  By Proposition
  \ref{T:evs_detpp2} and Lemma \ref{T:kernel_mean_var},
  \begin{align*}
    \var \mathcal{N}_\theta
    & = \frac{1}{4\pi^2} \int_0^\theta \int_\theta^{2\pi}  S_N(x-y)^2 \ dx \ dy 
    = \frac{1}{4\pi^2}\int_0^\theta \int_{\theta-y}^{2\pi-y}
    \frac{\sin^2\left(\frac{Nz}{2}\right)}{\sin^2\left(\frac{z}{2}\right)}
    \ dz\ dy\\
    & = \frac{1}{4\pi^2} \left[\int_0^\theta 
      \frac{z\sin^2 \left(\frac{Nz}{2}\right)}
      {\sin^2 \left(\frac{z}{2}\right)} \ dz
      + \int_\theta^{2\pi-\theta} 
      \frac{\theta\sin^2 \left(\frac{Nz}{2}\right)}
      {\sin^2\left(\frac{z}{2}\right)}\ dz 
      +\int_{2\pi-\theta}^{2\pi} 
      \frac{(2\pi-z)\sin^2\left(\frac{Nz}{2}\right)}
      {\sin^2\left(\frac{z}{2}\right)}\ dz\right] \\
    & = \frac{1}{2\pi^2} \left[ \int_0^\theta
      \frac{z\sin^2\left(\frac{Nz}{2}\right)}
      {\sin^2\left(\frac{z}{2}\right)} \ dz
      + \int_\theta^{\pi} \frac{\theta\sin^2\left(\frac{Nz}{2}\right)}
      {\sin^2\left(\frac{z}{2}\right)} \ dz \right].
  \end{align*}
  For the first integral, since
  $\sin\left(\frac{z}{2}\right) \ge \frac{z}{\pi}$ for all
  $z\in[0,\theta]$, if $\theta>\frac{1}{N}$, then
  \[
  \int_0^\theta \frac{z\sin^2\left(\frac{Nz}{2}\right)}
  {\sin^2\left(\frac{z}{2}\right)} \ dz
  \le \int_0^{\frac{1}{N}} \frac{(\pi N)^2z}{4} \ dz
  + \int_{\frac{1}{N}}^\theta \frac{\pi^2}{z} \ dz
  =\pi^2\left(\frac{1}{8}+\log(N)+\log(\theta)\right).
  \]
  If $\theta\le\frac{1}{N}$, there is no need to break up the integral
  and one simply has the bound $\frac{(\pi
    N\theta)^2}{8}\le\frac{\pi^2}{8}$.  Similarly, if
  $\theta<\frac{1}{N}$, then
  \begin{align*}
    \int_\theta^{\pi} 
    \frac{\theta\sin^2\left(\frac{Nz}{2}\right)}{\sin^2\left(\frac{z}{2}\right)}
    \ dz 
    & \le \int_\theta^{\frac{1}{N}}\frac{\theta  (\pi
      N)^2}{4} \ dz
    +\int_{\frac{1}{N}}^\pi \frac{\pi^2\theta}{z^2}\ dz\\
    & = \frac{\pi^2\theta N}{4}\left(1-N\theta\right)
    + \pi^2N\theta - \pi\theta \le \frac{5\pi^2}{4};
  \end{align*}
  if $\theta\ge\frac{1}{N}$, there is no need to break up the integral
  and one simply has a bound of $\pi^2$.
  
  All together,
  \[
  \var \mathcal{N}_\theta \le \log(N) + \frac{11}{16}.
  \qedhere
  \]
\end{proof}

\medskip

\begin{cor} \label{T:power_mean_var} Let $U$ be uniform in
  $\Unitary{N}$ and $1 \le m \le N$.  For $\theta \in [0,2\pi)$, let
  $\mathcal{N}^{(m)}_\theta$ be the number of eigenvalue angles of $U^m$ in
  $[0, \theta)$.  Then
  \[
  \E \mathcal{N}^{(m)}_\theta = \frac{N \theta}{2\pi}
  \qquad \text{and} \qquad
  \var \mathcal{N}^{(m)}_\theta \le m \left(\log \left(\frac{N}{m}\right) + 1\right).
  \]
\end{cor}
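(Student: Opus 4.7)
The plan is to reduce everything to the $m=1$ case via Rains's theorem, use the previously computed mean and variance, and then bound a sum of $\log N_i$'s by concavity.

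First I would apply Proposition \ref{T:Rains} to write $\mathcal{N}^{(m)}_\theta \overset{d}{=} \sum_{i=1}^m X_i$, where the $X_i$ are \emph{independent} random variables, and each $X_i$ is the number of eigenvalue angles in $[0,\theta)$ of a uniform random matrix $U_i \in \Unitary{N_i}$, with sizes $N_i \in \{\lfloor N/m\rfloor, \lceil N/m\rceil\}$ satisfying $\sum_{i=1}^m N_i = N$.

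For the mean, Proposition \ref{T:means}(1) gives $\E X_i = N_i \theta/(2\pi)$, and summing over $i$ collapses the sizes to $N$, yielding
\[
\E \mathcal{N}^{(m)}_\theta = \sum_{i=1}^m \frac{N_i \theta}{2\pi} = \frac{N\theta}{2\pi}.
\]

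For the variance, independence gives $\var \mathcal{N}^{(m)}_\theta = \sum_{i=1}^m \var X_i$, and Proposition \ref{T:unitary_var} applied to each $X_i$ yields $\var X_i \le \log N_i + 1$. The one small thing to notice is that the individual bound $\log N_i$ can be as large as $\log\lceil N/m\rceil$, which is slightly bigger than $\log(N/m)$; the clean way to absorb this is via concavity of the logarithm: by Jensen's inequality,
\[
\sum_{i=1}^m \log N_i \le m \log\left(\frac{1}{m}\sum_{i=1}^m N_i\right) = m \log\left(\frac{N}{m}\right),
\]
which gives exactly the desired bound $\var \mathcal{N}^{(m)}_\theta \le m\bigl(\log(N/m) + 1\bigr)$.

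There is no real obstacle here; every ingredient is already in place. The only subtle point is that the naive bound $\var X_i \le \log\lceil N/m\rceil + 1$ does not immediately give the stated estimate, since $\lceil N/m\rceil$ may strictly exceed $N/m$, and one needs to observe that the arithmetic-mean–geometric-mean gap cancels after summing: because $\log$ is concave and the sizes $N_i$ have fixed arithmetic mean $N/m$, the sum $\sum \log N_i$ is maximized when the $N_i$ are equal, giving the clean bound $m\log(N/m)$.
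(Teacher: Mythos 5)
Your proposal is correct and follows exactly the same route as the paper: apply Rains's decomposition (Proposition \ref{T:Rains}), sum the means and variances of the independent counting functions using Propositions \ref{T:means} and \ref{T:unitary_var}, and absorb the $\lceil N/m\rceil$ into $N/m$ via concavity of the logarithm. You spell out the Jensen step more explicitly than the paper's parenthetical remark, but the argument is the same.
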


\begin{proof}
By Proposition \ref{T:Rains}, $\mathcal{N}_\theta^{(m)}$ is equal in
distribution to the total number of eigenvalue angles in $[0,\theta)$ of each of
$U_0\ldots,U_{m-1}$, where $U_0,\ldots,U_{m-1}$ are independent and
$U_j$ is uniform in $\Unitary{\left\lceil\frac{N-j}{m}\right\rceil}$; that is,
\[\mathcal{N}_\theta^{(m)}\overset{d}=\sum_{j=0}^{m-1}\mathcal{N}_{j,\theta},\]
where the $\mathcal{N}_{j,\theta}$ are the independent counting
functions corresponding to $U_0,\ldots,U_{m-1}$.

The bounds in the corollary are thus automatic from Propositions \ref{T:means} and
\ref{T:unitary_var}. (Note that the $N/m$ in the
variance bound, as opposed to the more obvious $\lceil N/m \rceil$,
follows from the concavity of the logarithm.)
\end{proof}

\section{Wasserstein distances}\label{S:wasserstein}

In this section we prove bounds and concentration inequalities for the
spectral measures of fixed powers of uniform random unitary
matrices.  The method generalizes the approach taken in \cite{Da12} to
bound the distance of the spectral measure of the Gaussian unitary
ensemble from the semicircle law.

Recall that for $p \ge 1$, the $L_p$-Wasserstein distance between two
probability measures $\mu$ and $\nu$ on $\C$ is defined by
\[
W_p(\mu, \nu) = \left(\inf_{\pi \in \Pi(\mu, \nu)} \int \abs{w - z}^p
  \ d\pi(w, z) \right)^{1/p},
\]
where $\Pi(\mu, \nu)$ is the set of all probability measures on $\C
\times \C$ with marginals $\mu$ and $\nu$.

\begin{lemma} \label{T:evs_concentration} Let $1 \le m \le N$ and let
  $U \in \Unitary{N}$ be uniformly distributed. Denote by $e^{i \theta_j}$,
  $1 \le j \le N$, the eigenvalues of $U^m$, ordered so that $0 \le
  \theta_1 \le \dotsb \le \theta_N < 2\pi$.  Then for each $j$ and $u > 0$,
  \begin{equation}\label{E:individual_concentration}
  \Prob\left[\abs{\theta_j - \frac{2\pi j}{N}} > \frac{4\pi}{N} u\right] \le
  4 \exp\left[- \min \left\{\frac{u^2}{m\left(\log
        \left(\frac{N}{m}\right) + 1\right)}, u \right\}\right].
  \end{equation}
\end{lemma}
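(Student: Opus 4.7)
The plan is to exploit the standard duality between the ordered statistic $\theta_j$ and the counting function $\mathcal{N}^{(m)}_\theta$. Specifically, by definition of the ordering, for any $t \in [0, 2\pi)$ we have $\theta_j > t$ if and only if fewer than $j$ of the eigenvalue angles lie in $[0,t)$, i.e.\ $\mathcal{N}^{(m)}_t < j$; analogously $\theta_j < t$ if and only if $\mathcal{N}^{(m)}_t \ge j$. Thus the deviation event for $\theta_j$ translates into a deviation event for the counting function at the corresponding angle.

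Concretely, I would set $t_\pm = \frac{2\pi j}{N} \pm \frac{4\pi u}{N}$. By Corollary \ref{T:power_mean_var}, the mean of $\mathcal{N}^{(m)}_{t_\pm}$ is exactly $j \pm 2u$ (assuming $t_\pm \in [0,2\pi)$, which I will address below). Therefore
\[
\bigl\{\theta_j > t_+\bigr\} = \bigl\{\mathcal{N}^{(m)}_{t_+} < j\bigr\} = \bigl\{\mathcal{N}^{(m)}_{t_+} - \E \mathcal{N}^{(m)}_{t_+} < -2u\bigr\},
\]
and similarly $\{\theta_j < t_-\} = \{\mathcal{N}^{(m)}_{t_-} - \E\mathcal{N}^{(m)}_{t_-} \ge 2u\}$. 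Applying the Bernstein inequality \eqref{E:Bernstein} from Corollary \ref{T:power_counting_function} with the variance bound $\sigma^2 \le m(\log(N/m) + 1)$ from Corollary \ref{T:power_mean_var}, each of the two events has probability at most
\[
2\exp\!\left(-\min\!\left\{\frac{(2u)^2}{4\sigma^2}, \frac{2u}{2}\right\}\right)
= 2\exp\!\left(-\min\!\left\{\frac{u^2}{m(\log(N/m)+1)}, u\right\}\right),
\]
and a union bound yields the factor $4$ in \eqref{E:individual_concentration}.

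The only remaining issue is the boundary cases. If $t_+ \ge 2\pi$, then the event $\theta_j > t_+$ is empty since $\theta_j < 2\pi$ by construction, so there is nothing to prove on that side; similarly if $t_- \le 0$, then $\theta_j < t_-$ is impossible. In both cases one simply drops the contribution from that tail, and the stated bound continues to hold (with a smaller constant even). I do not foresee a genuine obstacle here: the argument is a clean transcription of Bernstein's inequality for $\mathcal{N}^{(m)}_\theta$ into a tail bound for $\theta_j$ via order-statistic duality, and the key inputs (the exact mean, the $m(\log(N/m)+1)$ variance bound, and the subgaussian-then-subexponential Bernstein tail from independent Bernoullis via Rains's decomposition) are all already in hand.
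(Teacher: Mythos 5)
Your proposal is correct and follows essentially the same approach as the paper's proof: rewrite the tail event for $\theta_j$ as a deviation of the counting function $\mathcal{N}^{(m)}_\theta$ via order-statistic duality, plug in the exact mean $\frac{N\theta}{2\pi}$ and the variance bound from Corollary \ref{T:power_mean_var}, apply the Bernstein bound \eqref{E:Bernstein}, and observe that the boundary cases ($t_+ \ge 2\pi$ or $t_- \le 0$) are vacuous since $\theta_j \in [0,2\pi)$.
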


\begin{proof}
  For each $1\le j\le N$ and $u > 0$, if $j + 2u < N$ then
  \begin{align*}
    \Prob\left[\theta_j > \frac{2\pi j}{N} + \frac{4\pi}{N} u\right]
    & = \Prob\left[\mathcal{N}^{(m)}_{\frac{2\pi (j+2u)}{N}}<j\right]
    = \Prob\left[j + 2u - \mathcal{N}^{(m)}_{\frac{2\pi (j+2u)}{N}} > 
      2u \right] \\
    & \le \Prob\left[ \abs{\mathcal{N}^{(m)}_{\frac{2\pi (j+2u)}{N}} 
        - \E \mathcal{N}^{(m)}_{\frac{2\pi (j+2u)}{N}}} > 2u \right].
  \end{align*}
  If $j+2u \ge N$ then
  \[
  \Prob\left[\theta_j > \frac{2\pi j}{N} + \frac{4\pi}{N} u\right] 
  = \Prob \left[\theta_j > 2\pi \right]
  = 0,
  \]
  and the above inequality holds trivially.  The probability that
  $\theta_j < \frac{2\pi j}{N} - \frac{4\pi}{N}u$ is bounded in the
  same way.  Inequality \eqref{E:individual_concentration} now follows
  from Corollaries \ref{T:power_counting_function} and \ref{T:power_mean_var}.
\end{proof}

\medskip

\begin{thm}\label{T:mean-dist}
  Let $\mu_{N,m}$ be the spectral measure of $U^m$, where $1 \le m \le N$
  and $U \in \Unitary{N}$ is uniformly distributed, and let $\nu$ denote
  the uniform measure on $\Circle$. Then for each $p \ge 1$,
  \[
  \E W_p (\mu_{N,m},\nu) 
  \le C p \frac{\sqrt{m\left[\log \left(\frac{N}{m}\right) + 1 \right]}}{N},
  \]
  where $C > 0$ is an absolute constant.
\end{thm}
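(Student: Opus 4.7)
The plan is to adapt Dallaporta's method. Introduce the discrete ``equally-spaced'' measure $\tilde\nu_N = \frac{1}{N}\sum_{j=1}^N \delta_{e^{2\pi i j/N}}$ on $\Circle$ and write, by the triangle inequality for $W_p$,
\[
W_p(\mu_{N,m}, \nu) \le W_p(\mu_{N,m}, \tilde\nu_N) + W_p(\tilde\nu_N, \nu).
\]
The second term is trivial: coupling $\nu$ to $\tilde\nu_N$ by collapsing each arc $[2\pi(j-1)/N, 2\pi j/N)$ onto its right endpoint yields $W_p(\tilde\nu_N,\nu) \le 2\pi/N$ for every $p$, so this is absorbed into the claimed bound.

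For the first term, I use the natural coupling that matches the $j$-th ordered eigenangle $\theta_j$ of $U^m$ with $2\pi j/N$. Combined with the elementary inequality $\abs{e^{i\alpha}-e^{i\beta}}\le\abs{\alpha-\beta}$, this gives
\[
W_p^p(\mu_{N,m}, \tilde\nu_N) \;\le\; \frac{1}{N}\sum_{j=1}^N \abs{\theta_j - \tfrac{2\pi j}{N}}^p.
\]
Thus it suffices to bound $\E\abs{\theta_j - 2\pi j/N}^p$ uniformly in $j$ by $(C'p\sigma/N)^p$, where $\sigma^2 := m[\log(N/m)+1]$.

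This uniform bound is the heart of the proof and is where the main technical work lies. By Lemma \ref{T:evs_concentration}, the random variable $X_j := \frac{N}{4\pi}\abs{\theta_j - 2\pi j/N}$ satisfies $\Prob[X_j > u] \le 4\exp[-\min\{u^2/\sigma^2,u\}]$. Writing $\E X_j^p = \int_0^\infty p u^{p-1}\Prob[X_j>u]\,du$ and splitting the integral at $u=\sigma^2$ (where the two regimes of the tail bound meet), one gets a subgaussian contribution bounded by $\frac{\sigma^p}{2}\Gamma(p/2)$ and a subexponential contribution bounded by $\Gamma(p)$. Since $\sigma\ge 1$ (as $\log(N/m)+1\ge 1$), both Gamma integrals are dominated by $(Cp\sigma)^p$ via Stirling, giving
\[
\E\abs{\theta_j - \tfrac{2\pi j}{N}}^p \;\le\; \Bigl(\frac{C' p\sigma}{N}\Bigr)^p.
\]
The chief obstacle is simply tracking constants so that the final bound carries a single factor of $p$, not $p^2$; the subexponential tail of Corollary~\ref{T:power_counting_function} forces the linear-in-$p$ rate, which is essential for the claim.

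Putting the pieces together, $\E W_p^p(\mu_{N,m},\tilde\nu_N) \le (C'p\sigma/N)^p$, and Jensen's inequality yields $\E W_p(\mu_{N,m},\tilde\nu_N) \le (\E W_p^p)^{1/p} \le C'p\sigma/N$. Adding the $O(1/N)$ error from $W_p(\tilde\nu_N,\nu)$ and recalling $\sigma = \sqrt{m[\log(N/m)+1]}$ gives the theorem.
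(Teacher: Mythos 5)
Your proof is correct and follows essentially the same route as the paper's: triangle inequality through the discrete equispaced measure, the sorted-coupling bound $W_p^p(\mu_{N,m},\nu_N)\le \tfrac1N\sum_j|\theta_j-2\pi j/N|^p$, moment bounds from the mixed subgaussian/subexponential tail of Lemma~\ref{T:evs_concentration}, and Jensen plus Stirling. (Minor note: your stated Gamma bounds each drop a factor of order $4p$ --- the subgaussian integral is $\lesssim p\sigma^p\Gamma(p/2)$ and the subexponential one $\lesssim\Gamma(p+1)$ --- but since $(4p)^{1/p}$ is bounded this does not affect the conclusion, and your use of $\sigma\ge1$ to absorb the subexponential term into $(Cp\sigma)^p$ is exactly the point the paper also relies on.)
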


\begin{proof}
  Let $\theta_j$ be as in Lemma \ref{T:evs_concentration}. Then by
  Fubini's theorem,
  \begin{align*}
    \E \abs{\theta_j-\frac{2\pi j}{N}}^p 
    & = \int_0^\infty p t^{p-1} \Prob\left[\abs{\theta_j - \frac{2\pi j}{N}} > t 
    \right] \ dt \\
    & =\frac{(4\pi)^p p}{N^p} \int_0^\infty u^{p-1} 
    \Prob\left[\abs{ \theta_j-\frac{2\pi j}{n}}
      > \frac{4\pi}{N} u \right] \ du \\
    & \le \frac{4(4\pi)^p p}{N^p} \left[
      \int_0^\infty u^{p-1} e^{-u^2/ m[\log (N/m) + 1]} \ du
    + \int_0^\infty u^{p-1} e^{-u} \ du \right]\\
    & = \frac{4(4\pi)^p}{N^p} \left[
      \left(m\left[\log\left(\frac{N}{m}\right) + 1 \right]\right)^{p/2} 
      \Gamma\left(\frac{p}{2} + 1\right) + \Gamma(p+1) \right]\\
    & \le 8 \Gamma(p+1) \left(\frac{4\pi}{N} 
      \sqrt{m\left[\log\left(\frac{N}{m} \right) + 1 \right]}\right)^p.
  \end{align*}
  Observe that in particular,
  \[
  \var \theta_j \le C \frac{ m \left[\log \left(\frac{N}{m}\right) + 1 \right]}
  {N^2}.
  \]

  Let $\nu_N$ be the measure which puts mass $\frac{1}{N}$ at each of
  the points $e^{2\pi i j/N}$, $1 \le j \le N$. Then
  \begin{align*}
      \E W_p(\mu_{N,m}, \nu_N)^p
      & \le \E \left[\frac{1}{N} \sum_{j=1}^N \abs{e^{i \theta_j} - e^{2\pi i j/N}}^p
        \right]
      \le \E \left[\frac{1}{N}\sum_{j=1}^N \abs{\theta_j-\frac{2\pi j}{N}}^p
      \right] \\
      & \le 8 \Gamma(p+1) \left(\frac{4\pi}{N} 
      \sqrt{m\left[\log\left(\frac{N}{m} \right) + 1 \right]}\right)^p.
    \end{align*}
  It is easy to check that $W_p(\nu_N,\nu) \le
  \frac{\pi}{N}$, and thus
\[\E W_p(\mu_{N,m},\nu)\le \E
W_p(\mu_{N,m},\nu_N)+\frac{\pi}{N}\le\left(\E W_p(\mu_{N,m},\nu_N)^p\right)^{\frac{1}{p}}+\frac{\pi}{N}.\]
Applying Stirling's formula to bound $\Gamma(p+1)^{\frac{1}{p}}$ completes the
  proof.
\end{proof}

\medskip

In the case that $m=1$ and $p\le 2$, Theorem \ref{T:mean-dist} could
now be combined with Corollary 2.4 and Lemma 2.5 from \cite{MM12} in
order to obtain a sharp concentration inequality for
$W_p(\mu_{N,1},\nu)$.  However, for $m > 1$ we did not prove an
analogous concentration inequality for $W_p(\mu_{N,m},\nu)$ because
the main tool needed to carry out the approach taken in \cite{MM12},
specifically, a logarithmic Sobolev inequality on the full unitary
group, was not available.  The appendix to this paper contains the
proof of the necessary logarithmic Sobolev inequality on the unitary
group (Theorem \ref{T:LSI}) and the approach to concentration taken in
\cite{MM12}, in combination with Proposition \ref{T:Rains}, can then
be carried out in the present context.

The following lemma, which generalizes part of \cite[Lemma 2.3]{MM12},
provides the necessary Lipschitz estimates for the functions to which
the concentration property will be applied.

\begin{lemma}\label{T:Lipschitz-estimates}
  Let $p \ge 1$. The map $A \mapsto \mu_A$ taking an $N\times N$
  normal matrix to its spectral measure is Lipschitz with constant
  $N^{-1/\max\{p, 2\}}$ with respect to $W_p$. Thus if $\rho$ is any
  fixed probability measure on $\C$, the map $A \mapsto W_p(\mu_A,
  \rho)$ is Lipschitz with constant $N^{-1/\max\{p, 2\}}$.
\end{lemma}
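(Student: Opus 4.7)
The plan is to invoke the Hoffman--Wielandt inequality, which is the standard tool relating eigenvalues of normal matrices to the Hilbert--Schmidt distance, and then unpack the two ranges $p \ge 2$ and $1 \le p \le 2$ separately.

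Let $A$ and $B$ be $N \times N$ normal matrices with eigenvalues $\alpha_1, \dots, \alpha_N$ and $\beta_1, \dots, \beta_N$ respectively (listed with multiplicity). The Hoffman--Wielandt inequality supplies a permutation $\sigma$ of $\{1, \dots, N\}$ with
\[
\sum_{i=1}^{N} \abs{\alpha_i - \beta_{\sigma(i)}}^2 \le \norm{A - B}_{HS}^2.
\]
Using this matching to define the coupling $\pi = \frac{1}{N} \sum_{i} \delta_{(\alpha_i, \beta_{\sigma(i)})} \in \Pi(\mu_A, \mu_B)$ gives, for any $p \ge 1$,
\[
W_p(\mu_A, \mu_B)^p \le \frac{1}{N} \sum_{i=1}^{N} \abs{\alpha_i - \beta_{\sigma(i)}}^p.
\]

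For $p \ge 2$, I would use the elementary $\ell^p \le \ell^2$ inequality $\sum_i |x_i|^p \le \bigl(\sum_i |x_i|^2\bigr)^{p/2}$ to get
\[
W_p(\mu_A, \mu_B)^p \le \frac{1}{N} \left(\sum_{i=1}^{N} \abs{\alpha_i - \beta_{\sigma(i)}}^2\right)^{p/2} \le \frac{1}{N} \norm{A - B}_{HS}^p,
\]
i.e.\ $W_p(\mu_A, \mu_B) \le N^{-1/p} \norm{A - B}_{HS}$. For $1 \le p \le 2$, I would instead exploit monotonicity of $W_p$ in $p$ (an immediate consequence of Jensen's inequality applied to the coupling that realizes $W_2$): since the case $p = 2$ already gives $W_2(\mu_A, \mu_B) \le N^{-1/2} \norm{A - B}_{HS}$, the same bound $W_p \le W_2 \le N^{-1/2} \norm{A-B}_{HS}$ holds. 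Combining the two ranges yields the Lipschitz constant $N^{-1/\max\{p,2\}}$ claimed for $A \mapsto \mu_A$.

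For the second assertion, the triangle inequality for $W_p$ (which is a genuine metric on probability measures with finite $p$-th moment on $\C$) gives
\[
\abs{W_p(\mu_A, \rho) - W_p(\mu_B, \rho)} \le W_p(\mu_A, \mu_B),
\]
so the Lipschitz constant transfers directly to $A \mapsto W_p(\mu_A, \rho)$. There is no real obstacle here: Hoffman--Wielandt does all the work, and the only mildly subtle point is remembering to split the argument at $p = 2$ so as to get the correct exponent on $N$ in each range.
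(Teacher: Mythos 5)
Your proof is correct and follows essentially the same route as the paper: Hoffman--Wielandt, the optimal-permutation coupling, and a comparison of $\ell^p$ norms (equivalently, power means) to produce the factor $N^{-1/\max\{p,2\}}$, followed by the triangle inequality for $W_p$. The only cosmetic difference is that the paper writes the case split compactly in a single chain using the $\max\{p,2\}$ exponent, whereas you handle $1 \le p \le 2$ by invoking the monotonicity $W_p \le W_2$; this is the same Jensen-type inequality, just applied at the level of Wasserstein distances rather than directly to the coupling sums.
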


\begin{proof}
  If $A$ and $B$ are $N \times N$ normal matrices, then the
  Hoffman--Wielandt inequality \cite[Theorem VI.4.1]{Bh} states
  that
  \begin{equation}\label{E:permutations_HS_bound}
    \min_{\sigma \in \Sigma_N} \sum_{j=1}^N \abs{\lambda_j(A) -
      \lambda_{\sigma(j)}(B)}^2 \le \norm{A - B}_{HS}^2, 
  \end{equation}
  where $\lambda_1(A), \dotsc, \lambda_N(A)$ and $\lambda_1(B),
  \dotsc, \lambda_N(B)$ are the eigenvalues (with multiplicity, in any
  order) of $A$ and $B$ respectively, and $\Sigma_N$ is the group of
  permutations on $N$ letters. Defining couplings of $\mu_A$ and
  $\mu_B$ given by
  \[
  \pi_\sigma = \frac{1}{N} \sum_{j=1}^N \delta_{(\lambda_j(A),
    \lambda_{\sigma(j)}(B))}
  \]
  for $\sigma \in \Sigma_N$, it follows from
  \eqref{E:permutations_HS_bound} that
  \begin{align*}
    W_p(\mu_A, \mu_B)
    & \le \min_{\sigma \in \Sigma_N} \left( \frac{1}{N}
    \sum_{j=1}^N \abs{\lambda_j(A) - \lambda_{\sigma(j)}(B)}^p \right)^{1/p} \\
    & \le N^{-1/\max\{p, 2\}}\min_{\sigma \in \Sigma_N} \left(
    \sum_{j=1}^N \abs{\lambda_j(A) - \lambda_{\sigma(j)}(B)}^2 \right)^{1/2} \\
    &\le  N^{-1/\max\{p, 2\}} \norm{A - B}_{HS}.
    \qedhere
  \end{align*}
\end{proof}

\medskip

\begin{thm} \label{T:dist-concentration}
  Let $\mu_{N,m}$ be the empirical spectral measure of $U^m$, where $U
  \in \Unitary{N}$ is uniformly distributed and $1 \le m\le N$, and let
  $\nu$ denote the uniform probability measure on $\Circle$.  Then for
  each $t > 0$,
  \[
  \Prob \left[W_p(\mu_{N,m},\nu) \ge C
    \frac{\sqrt{m\left[\log\left(\frac{N}{m}\right) + 1 \right]}}{N} 
    + t \right] 
  \le \exp\left[-\frac{N^2 t^2}{24 m} \right]
  \]
  for $1 \le p \le 2$ and
  \[
  \Prob \left[W_p(\mu_{N,m},\nu) 
    \ge C p \frac{\sqrt{m \left[\log\left(\frac{N}{m}\right) + 1 \right]}}{N} 
  + t \right]
  \le \exp\left[-\frac{N^{1+2/p}t^2}{24 m} \right]
  \]
  for $p > 2$, where $C > 0$ is an absolute constant.
\end{thm}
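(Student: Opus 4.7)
The strategy is to reduce, via Rains's theorem, from the single random matrix $U\in\Unitary{N}$ to a vector of independent unitary matrices living in a product of smaller unitary groups, and then to apply Gaussian-type concentration coming from a logarithmic Sobolev inequality on that product.

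First I would use Proposition \ref{T:Rains} to realize $\mu_{N,m}$ as the empirical spectral measure of the block-diagonal matrix $A=U_0\oplus\cdots\oplus U_{m-1}$, where the $U_j$ are independent and uniform in $\Unitary{n_j}$ with $n_j=\lceil (N-j)/m\rceil$; note in particular that $n_j\ge\lfloor N/m\rfloor\gtrsim N/m$. The key observation is that the Hilbert--Schmidt metric on $\Unitary{N}$ restricted to block-diagonal matrices matches the product Hilbert--Schmidt metric on $\prod_j\Unitary{n_j}$, because $\|A-B\|_{HS}^2=\sum_j\|U_j-V_j\|_{HS}^2$. Together with Lemma \ref{T:Lipschitz-estimates}, this shows that the function
\[
F(U_0,\ldots,U_{m-1})=W_p\bigl(\mu_{U_0\oplus\cdots\oplus U_{m-1}},\nu\bigr)
\]
on $\prod_j\Unitary{n_j}$ is Lipschitz with constant $L=N^{-1/\max\{p,2\}}$ with respect to the product Hilbert--Schmidt metric.

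Next I would invoke the log Sobolev inequality on the full unitary group (Theorem \ref{T:LSI} in the appendix): uniform measure on $\Unitary{n}$ satisfies a LSI with constant of order $1/n$ in the HS metric. By the standard tensorization property of log Sobolev inequalities, uniform measure on the product $\prod_j\Unitary{n_j}$ then satisfies a LSI whose constant is the maximum of the individual constants, hence of order $1/\min_j n_j = O(m/N)$. Herbst's argument applied to $F$ thus yields
\[
\Prob\bigl[F-\E F>t\bigr]
\le\exp\!\left(-\frac{c\,t^2}{L^2\cdot(m/N)}\right)
=\exp\!\left(-\frac{c\,N^{1+2/\max\{p,2\}}t^2}{m}\right),
\]
for a suitable absolute constant $c>0$. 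Tracking constants carefully, one can arrange $c\ge 1/24$, matching the exponents in the stated inequalities (the case $p\le 2$ gives $N^2t^2/(24m)$ and $p>2$ gives $N^{1+2/p}t^2/(24m)$).

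Finally, I would combine this deviation inequality with Theorem \ref{T:mean-dist}, which controls $\E W_p(\mu_{N,m},\nu)$ by $Cp\,\sqrt{m[\log(N/m)+1]}/N$, so that $\E F$ can be replaced by the explicit deterministic upper bound at the cost of adjusting the constant $C$. The whole argument is essentially bookkeeping once the two ingredients are in place; the substantive input which made this proof possible is the LSI on $\Unitary{N}$ itself (the main obstacle, which is handled in the appendix), since the method of \cite{MM12} that works for $\SUnitary{N}$ does not apply directly because of the degeneracy of the Ricci tensor on $\Unitary{N}$. The only minor subtlety is checking that the small-block sizes $n_j$ never drop below $N/(2m)$ (true whenever $N/m\ge 2$), while the boundary regime $m>N/2$ is absorbed into the mean term and the absolute constants.
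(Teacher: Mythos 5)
Your proof is correct and takes exactly the same route as the paper: Rains's block-diagonal decomposition into a product of independent smaller unitary groups, the log-Sobolev inequality of Theorem \ref{T:LSI} combined with tensorization and the Herbst argument (packaged in the paper as Corollary \ref{T:unitary-concentration}), the Lipschitz estimate of Lemma \ref{T:Lipschitz-estimates}, and the mean bound of Theorem \ref{T:mean-dist}. One minor correction: the estimate $\lfloor N/m\rfloor\ge N/(2m)$ actually holds for all $1\le m\le N$ (when $1\le N/m<2$ one has $\lfloor N/m\rfloor=1>N/(2m)$), so the ``boundary regime'' $m>N/2$ you flag requires no separate treatment.
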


\begin{proof}
  By Proposition \ref{T:Rains}, $\mu_{N,m}$ is equal in distribution
  to the spectral measure  of a block-diagonal $N \times N$ random matrix $U_1 \oplus
  \dotsb \oplus U_m$, where the $U_j$ are independent and uniform in
  $\Unitary{\left\lfloor \frac{N}{m} \right\rfloor}$ and
  $\Unitary{\left\lceil \frac{N}{m} \right\rceil}$.  Identify
  $\mu_{N,m}$ with this measure and define the function $F(U_1,
  \ldots, U_m) = W_p(\mu_{U_1\oplus\dotsb\oplus U_m},\nu)$; the
  preceding discussion means that if $U_1,\ldots,U_m$ are independent
  and uniform in $\Unitary{\left\lfloor \frac{N}{m} \right\rfloor}$ and
  $\Unitary{\left\lceil \frac{N}{m} \right\rceil}$ as necessary, then
  $F(U_1,\ldots,U_m)\overset{d}=W_p(\mu_{N,m}, \nu)$.  

Applying the concentration inequality in Corollary
  \ref{T:unitary-concentration} of the appendix to the function $F$ gives that
\[ \Prob \bigl[ F(U_1, \dotsc, U_m) \ge \E F(U_1, \dotsc, U_m) + t \bigr]
  \le e^{- N t^2 / 24m L^2},\] 
where $L$ is the Lipschitz constant of $F$, and we have used the
trivial estimate $\left\lfloor\frac{N}{m}\right\rfloor\ge\frac{N}{2m}$.
Inserting the estimate of $\E F(U_1,\ldots,U_m)$ from
  Theorem \ref{T:mean-dist} and the Lipschitz estimates of Lemma
  \ref{T:Lipschitz-estimates} completes the proof.
\end{proof}

\medskip

\begin{cor}\label{T:unitary-BC}
  Suppose that for each $N$, $U_N \in \Unitary{N}$ is uniformly distributed
  and $1 \le m_N \le N$. Let $\nu$ denote the uniform measure on
  $\Circle$. There is an absolute constant $C$ such that given $p \ge
  1$, with probability $1$, for all sufficiently large $N$,
  \[
  W_p(\mu_{N, m_N}, \nu) \le C \frac{\sqrt{m_N \log(N)}}{N}
  \]
  if $1 \le p \le 2$ and
  \[
  W_p(\mu_{N, m_N}, \nu) \le C p \frac{\sqrt{m_N \log(N)}}
  {N^{\frac{1}{2} + \frac{1}{p}}}
  \]
  if $p > 2$.
\end{cor}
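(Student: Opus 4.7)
The plan is a standard Borel--Cantelli argument applied to the subgaussian tail bound in Theorem \ref{T:dist-concentration}. For each $N$ I will choose a deviation level $t_N$ so that the probability on the left side of Theorem \ref{T:dist-concentration} is summable in $N$, and then conclude that $W_p(\mu_{N,m_N},\nu)$ exceeds the chosen threshold for only finitely many $N$, almost surely. A useful preliminary observation is that since $m_N \ge 1$, one has $\log(N/m_N) + 1 \le \log N + 1 \le 2\log N$ for $N \ge 3$, so the mean term in Theorem \ref{T:dist-concentration} is already dominated by a constant times the target bound in the corollary.

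For $1 \le p \le 2$, I would take $t_N = K \sqrt{m_N \log N}/N$ for a sufficiently large absolute constant $K$. The total threshold appearing in Theorem \ref{T:dist-concentration} is then at most $(C+K)\sqrt{m_N \log N}/N$, and the resulting tail probability is
\[
\exp\!\left(-\frac{N^2 t_N^2}{24 m_N}\right) = \exp\!\left(-\frac{K^2 \log N}{24}\right) = N^{-K^2/24}.
\]
Choosing $K$ with $K^2 > 24$ makes the bound summable in $N$, and Borel--Cantelli gives the first half of the corollary (after absorbing $C+K$ into a new absolute constant).

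For $p > 2$, the analogous choice is $t_N = K \sqrt{m_N \log N}/N^{1/2 + 1/p}$. The tail bound from Theorem \ref{T:dist-concentration} becomes $\exp(-N^{1 + 2/p} t_N^2/24 m_N) = N^{-K^2/24}$, again summable for $K$ large. The mean term $Cp \sqrt{m_N [\log(N/m_N) + 1]}/N$ is bounded by $2Cp \sqrt{m_N \log N}/N \le 2Cp \sqrt{m_N \log N}/N^{1/2 + 1/p}$ since $1/p > 0$, so combining the mean with $t_N \le Kp \sqrt{m_N \log N}/N^{1/2+1/p}$ absorbs everything into a constant multiple of $p \sqrt{m_N \log N}/N^{1/2 + 1/p}$, as required.

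There is no substantive obstacle here once Theorem \ref{T:dist-concentration} is in hand; the only mild care needed is to ensure that the chosen threshold depends on $N$ and $m_N$ only through the stated rate (the constant $K$ is an absolute constant, and the factor of $p$ in the second case enters only through the mean estimate), which is guaranteed by the uniformity in Theorem \ref{T:dist-concentration} and by the trivial bound $\log(N/m_N) + 1 \le 2\log N$.
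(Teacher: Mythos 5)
Your proposal is correct and is essentially identical to the paper's proof: the paper also applies Theorem \ref{T:dist-concentration} with the choice $t_N = 5\sqrt{m_N\log N}/N$ (respectively $t_N = 5\sqrt{m_N\log N}/N^{1/2+1/p}$ for $p>2$) and invokes Borel--Cantelli, which is exactly your argument with $K=5$. The only difference is that you spell out the bookkeeping (the bound $\log(N/m_N)+1 \le 2\log N$ and the summability condition $K^2 > 24$) that the paper leaves implicit.
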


\begin{proof}
  In Theorem \ref{T:dist-concentration} let $t_N = 5 \frac{\sqrt{m_N
      \log(N)}}{N}$ for $p\in[1,2]$ and $t_N = 5\frac{\sqrt{m_N
      \log(N)}}{N^{\frac{1}{2} + \frac{1}{p}}}$ for $p>2$, and apply
  the Borel--Cantelli lemma.
\end{proof}

We observe that Corollary \ref{T:unitary-BC} makes no assumption
about any joint distribution of the matrices $\{U_N\}_{N\in\N}$; in
particular, they need not be independent.

\bigskip

As a final note, Rains's Proposition \ref{T:Rains} above shows that, in the
case $m=N$, $\mu_{N,m}$ is the empirical measure of $N$ i.i.d.\ points
on $\Circle$.  By another result of Rains \cite{Ra97}, the same is
true when $m > N$.  In particular, in all the above results the
restriction $m \le N$ may be removed if $m$ is simply replaced by
$\min \{m,N\}$ in the conclusion.

\section{Other groups}\label{S:other_groups}

The approach taken above can be completed in essentially the same way
for $\SOrthogonal{N}$, $\SOneg{N}$ and $\Symplectic{N}$, so that all
the results above hold in those cases as well, with only the precise
values of constants changed.

In \cite{Ra03}, Rains proved that the eigenvalue distributions for
these groups (or rather, components, in the case of $\SOneg{N}$) can
be decomposed similarly to the decomposition described in Proposition
\ref{T:Rains}, although the decompositions are more complicated in
those cases (mostly because of parity issues).  The crucial fact,
though, is that the decomposition is still in terms of independent
copies of smaller-rank (orthogonal) groups and cosets.  This allows
for the representation of the eigenvalue counting function in all
cases as a sum of independent Bernoulli random variables (allowing for
the application of Bernstein's inequality) and as a sum of independent
copies of eigenvalue counting functions for smaller-rank groups.  In
particular the analogue of Corollary \ref{T:power_counting_function}
holds and it suffices to estimate the means and variances in the case
$m=1$.  The analogue of Proposition \ref{T:unitary_var} for the other
groups can be proved similarly using Proposition \ref{T:evs_detpp2}
and Lemma \ref{T:kernel_mean_var}.

With those tools and Proposition \ref{T:means} on hand, the analogue
of Theorem \ref{T:mean-dist} can be proved in the same way, with a
minor twist. One can bound as in the proof of Theorem
\ref{T:mean-dist} the distance between the empirical measure
associated to the nontrivial eigenvalues and the uniform measure on
the upper-half circle. Since the nontrivial eigenvalues occur in
complex conjugate pairs and there are at most two trivial eigenvalues,
one gets essentially the same bound for the distance between the
empirical spectral measure and the uniform measure on the whole
circle.

Finally, logarithmic Sobolev inequalities --- and hence concentration
results analogous to Corollary \ref{T:unitary-concentration} --- for
the other groups are already known via the Bakry--\'Emery criterion,
cf.\ \cite[Section 4.4]{AGZ}, so that the analogue of Theorem
\ref{T:dist-concentration} follows as for the unitary group.

For the special unitary group $\SUnitary{N}$, all the results stated
above hold exactly as stated for the full unitary group, cf.\ the
proof of \cite[Lemma 2.5]{MM12}.  Analogous results for the full
orthogonal group $\Orthogonal{N}$ follow from the results for
$\SOrthogonal{N}$ and $\SOneg{N}$ by conditioning on the determinant,
cf.\ the proofs of Theorem 2.6 and Corollary 2.7 in \cite{MM12}.

\section*{Appendix: the log-Sobolev constant of
the unitary group}

In this section we prove a logarithmic Sobolev inequality for the
unitary group with a constant of optimal order.  As a consequence, we
obtain a sharp concentration inequality, independent of $k$, for
functions of $k$ independent unitary random matrices.

Recall the following general definitions for a metric space $(X,d)$
equipped with a Borel probability measure $\mu$.  The entropy of a
measurable function $f:X \to [0, \infty)$ with respect to
$\mu$ is
\[
\Ent_\mu(f) := \int f\log(f) \ d\mu - \left(\int f \ d\mu \right)
\log \left(\int f \ d\mu\right).
\]
For a locally Lipschitz function $g:X \to \R$,
\[
\abs{\nabla g}(x) := \limsup_{y\to x}\frac{\abs{g(y)-g(x)}}{d(y,x)}.
\]
We say that $(X,d,\mu)$ satisfies a logarithmic Sobolev inequality (or
log-Sobolev inequality for short) with constant $C > 0$ if, for every
locally Lipschitz $f:X\to\R$,
\begin{equation}\label{E:LSI}
\Ent_\mu (f^2) \le 2 C \int \abs{\nabla f}^2
\ d\mu.
\end{equation}

\begin{thm} \label{T:LSI} The unitary group $\Unitary{N}$, equipped
  with its uniform probability measure and the Hilbert--Schmidt
  metric, satisfies a logarithmic Sobolev inequality with constant
  $6/N$.
\end{thm}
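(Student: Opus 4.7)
The proof faces the well-known difficulty that the Ricci tensor on $\Unitary{N}$ is degenerate in the central direction $\{e^{i\theta} I \cln \theta\in[0,2\pi)\}$, so the Bakry--\'Emery criterion cannot be applied directly. My plan is to reduce to $\SUnitary{N}$, where Bakry--\'Emery \emph{does} work, by passing to the covering
\[
\pi \cln \SUnitary{N} \times \Circle \to \Unitary{N}, \qquad \pi(V, e^{i\theta}) = e^{i\theta} V,
\]
whose kernel is the cyclic group $\{(\zeta^{-k} I,\, e^{2\pi i k/N}) \cln k \in \Z/N\Z\}$, where $\zeta = e^{2\pi i/N}$. If $\Circle$ is equipped with the metric in which $\theta\mapsto e^{i\theta} I$ has speed $\sqrt{N}$ (so the central circle has HS-circumference $2\pi\sqrt{N}$), then $\pi$ becomes a Riemannian covering, and the lift $\tilde f := f\circ\pi$ of any $f\cln\Unitary{N}\to\R$ preserves both entropy and Dirichlet energy. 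By construction, $\tilde f$ is invariant under the deck group $\Z/N\Z$.

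The main step is to apply the entropy chain rule on the product,
\[
\Ent(\tilde f^2) \;=\; \E_\theta\bigl[\Ent_V(\tilde f^2)\bigr] + \Ent_\theta\bigl(\E_V[\tilde f^2]\bigr),
\]
and bound the two pieces by separate logarithmic Sobolev inequalities on the two factors. The first piece is controlled by the sharp LSI on $\SUnitary{N}$, which follows from Bakry--\'Emery since the Ricci tensor of $\SUnitary{N}$ in the HS metric satisfies $\mathrm{Ric}\ge (N/2)\,\mathrm{id}$; this contributes a constant of order $1/N$. For the second piece, set $g(e^{i\theta}) := \E_V[\tilde f^2(V,e^{i\theta})]$; the standard Cauchy--Schwarz estimate $|\nabla_\theta\sqrt{g}|^2 \le \E_V[|\nabla_c\tilde f|^2]$ reduces the problem to controlling $\Ent_\theta(g)$ by an LSI on the central circle.

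The crucial observation is that for $\Z/N\Z$-invariant $\tilde f$, the combination of this invariance with the left-invariance of Haar measure on $\SUnitary{N}$ forces $g$ to be $(2\pi/N)$-periodic in $\theta$. Consequently $g$ descends to a function on a quotient circle of HS-circumference $2\pi/\sqrt{N}$, on which the LSI constant is of order $1/N$ rather than the order $N$ one obtains on the full central circle. Combining the two bounds via the HS-orthogonal decomposition $|\nabla\tilde f|^2 = |\nabla_V\tilde f|^2 + |\nabla_c\tilde f|^2$ (which holds because $\mathfrak{su}(N) \perp i\R\cdot I$ in the HS inner product), and tracking constants carefully, yields an LSI on $\Unitary{N}$ with constant $6/N$. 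The hard part is exactly this periodicity reduction in the central piece: without exploiting that lifted functions are $\Z/N\Z$-invariant, naive tensorization applied to the full central circle produces a useless constant of order $N$, and the whole method collapses.
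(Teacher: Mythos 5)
Your proposal is correct, and it takes a genuinely different route from the paper, though both exploit the same essential observation: the center of $\SUnitary{N}$ (the $N$-th roots of unity times $I$) effectively ``folds'' the central circle down to an arc of length $2\pi/N$, killing the degeneracy of the Ricci tensor. The paper captures this via a simple coupling lemma --- if $\theta$ is uniform on $[0, 2\pi/N]$ and $V$ is uniform on $\SUnitary{N}$ independently, then $e^{i\theta}V$ is Haar on $\Unitary{N}$ --- and then transfers the LSI from the product $[0, \pi\sqrt{2}/\sqrt{N}) \times \SUnitary{N}$ via a $\sqrt{3}$-Lipschitz push-forward, paying a factor of $3$ for the Lipschitz constant. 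You instead lift to the $N$-fold covering $\SUnitary{N}\times\Circle$, apply the entropy chain rule, and use the $\Z/N\Z$-invariance of lifted functions to reduce the central factor's LSI to the quotient circle of circumference $2\pi/\sqrt{N}$, where the constant is $1/N$. This avoids the Lipschitz overhead entirely: your covering map is a Riemannian local isometry, so the gradient decomposition is exact. In fact, if you carry out the constant-tracking you gesture at, your method gives the LSI with constant $\max\{2/N,\ 1/N\} = 2/N$, which is strictly sharper than the paper's $6/N$ (and hence certainly establishes the stated result). The paper's route has the virtue of avoiding the entropy chain rule and the periodicity argument, relying only on tensorization and a push-forward under a Lipschitz map; yours is more structural and gives a better constant. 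Both are valid proofs.
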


\medskip

If the Riemannian structure on $\Unitary{N}$ is 
the one induced by the usual Hilbert--Schmidt inner product on
matrix space, the the geodesic distance is bounded above by $\pi/2$
times the Hilbert--Schmidt distance on $\Unitary{N}$ (see e.g.\
\cite[Lemma 3.9.1]{Blower}). Thus Theorem \ref{T:LSI} implies that
$\Unitary{N}$ equipped with the geodesic distance also satisfies a
log-Sobolev inequality, with constant $3 \pi^2/ 2N$.

It is already known that every compact Riemannian manifold, equipped
with the normalized Riemannian volume measure and geodesic distance,
satisfies a log-Sobolev inequality with some finite constant
\cite{Rot2}.  For applications like those in this paper to a sequence
of manifolds such as $\{\Unitary{N}\}_{N=1}^\infty$, however, the
order of the constant as $N$ grows is crucial.  The constant in
Theorem \ref{T:LSI} is best possible up to a constant factor; this can
be seen, for example, from the fact that one can recover the sharp
concentration of measure phenomenon on the sphere from
Corollary \ref{T:unitary-concentration} below.

The key to the proof of Theorem \ref{T:LSI} is the following
representation of uniform measure on the unitary group.

\begin{lemma} \label{T:coupling} Let $\theta$ be uniformly distributed
  in $\left[0, \frac{2\pi}{N}\right]$ and let $V \in \SUnitary{N}$ be
  uniformly distributed, with $\theta$ and $V$ independent.  Then
  $e^{i\theta}V$ is uniformly distributed in $\Unitary{N}$.
\end{lemma}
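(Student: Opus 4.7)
The plan is to identify the distribution of $e^{i\theta}V$ with Haar measure on $\Unitary{N}$ by matching their disintegrations over the determinant map $\det\colon\Unitary{N}\to\Circle$. Because $\det$ is a continuous surjective homomorphism of compact groups with kernel $\SUnitary{N}$, it pushes Haar on $\Unitary{N}$ forward to uniform measure on $\Circle$; and by left-invariance of Haar, the conditional law of a Haar-distributed $U$ given $\det U = \alpha$ is uniform on the coset $\det^{-1}(\alpha) = U_0\SUnitary{N}$ for any representative $U_0\in\det^{-1}(\alpha)$, i.e.\ the left-translate by $U_0$ of Haar on $\SUnitary{N}$.

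I would then compute both ingredients directly for $e^{i\theta}V$. Since $V\in\SUnitary{N}$, one has $\det(e^{i\theta}V) = e^{iN\theta}\det V = e^{iN\theta}$, and the uniform distribution of $\theta$ on $[0,2\pi/N]$ makes $N\theta$ uniform on $[0,2\pi]$, so $\det(e^{i\theta}V)$ is uniform on $\Circle$, matching the marginal. For the conditional, observe that $\theta\mapsto e^{iN\theta}$ is a bijection from $[0,2\pi/N)$ onto $\Circle$, so conditioning on $\det(e^{i\theta}V)=\alpha$ pins $\theta$ to its unique preimage $\theta_\alpha$; by independence of $\theta$ and $V$, the conditional law of $V$ remains Haar on $\SUnitary{N}$, and thus $e^{i\theta}V = e^{i\theta_\alpha}V$ has for its conditional law precisely the left-translate of Haar on $\SUnitary{N}$ by the scalar $e^{i\theta_\alpha}\in\det^{-1}(\alpha)$. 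Agreement of both the marginals and the conditional laws identifies the distribution of $e^{i\theta}V$ with Haar on $\Unitary{N}$.

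There is no real obstacle here; the only fact that requires a moment's thought is the disintegration of Haar on $\Unitary{N}$ over $\det$, which is a standard consequence of quotienting a compact group by a closed normal subgroup. A disintegration-free alternative is to verify left-invariance of the law of $e^{i\theta}V$ directly: writing any $W\in\Unitary{N}$ as $W = e^{i\phi}W_0$ with $W_0\in\SUnitary{N}$, one has $We^{i\theta}V = e^{i(\phi+\theta)}W_0V$, and the $(2\pi/N)$-wraparound when reducing $\phi+\theta$ mod $2\pi/N$ can be absorbed into the $\SUnitary{N}$-factor via the central scalar $e^{2\pi i\ell/N}I_N\in\SUnitary{N}$, returning the expression to the original form with a uniform $[0,2\pi/N)$-phase and an independent $\SUnitary{N}$-uniform factor; uniqueness of Haar probability measure on $\Unitary{N}$ then concludes.
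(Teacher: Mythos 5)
Your proof is correct but structured differently from the paper's. The paper constructs an auxiliary random unitary $U = e^{2\pi iX/N}e^{2\pi iK/N}V$ with $X$ uniform on $[0,1)$ and $K$ uniform on $\{0,\dotsc,N-1\}$, then argues two ways: on one hand $e^{2\pi i(X+K)/N}$ is uniform on $\Circle$ so $U\overset{d}=\omega V$ is Haar on $\Unitary{N}$ (using the known fact, cf.\ \cite{MM12}, that $\omega V$ is Haar when $\omega$ is uniform on the full circle); on the other hand $e^{2\pi iK/N}I_N\in\SUnitary{N}$, so by translation invariance $e^{2\pi iK/N}V\overset{d}=V$ and hence $U\overset{d}=e^{2\pi iX/N}V$. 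Your disintegration argument trades that coupling for a direct matching of the fiberwise conditional laws over $\det$; it is tidier in that it does not rest on the prior ``$\omega V$ is Haar'' fact, but it invokes the disintegration of Haar over a quotient map, which is heavier (though standard) machinery. Your invariance check is closest in spirit to the paper's --- absorbing the wraparound scalar $e^{2\pi i\ell/N}I_N$ into the $\SUnitary{N}$-factor is precisely the same move the paper makes with $e^{2\pi iK/N}I_N$ --- but one step you gloss over deserves an explicit word: the independence after wraparound. Conditioning on $\theta$ fixes $\ell$, so $\tilde V = e^{2\pi i\ell/N}W_0V$ has conditional law Haar on $\SUnitary{N}$ regardless of $\theta$, hence $\tilde V$ is independent of the reduced phase $\tilde\theta = (\phi+\theta)\bmod(2\pi/N)$; without this the ``returning the expression to the original form'' claim is not complete. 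All three routes ultimately hinge on the same structural point: the central subgroup $\{e^{2\pi ik/N}I_N\}\subset\SUnitary{N}$ is exactly what lets a phase window of length $2\pi/N$ fill out the full circle.
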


\begin{proof}
  Let $X$ be uniformly distributed in $[0,1)$, $K$ uniformly
  distributed in $\{0, \ldots, N-1\}$, and $V$ uniformly distributed
  in $\SUnitary{N}$ 
  with $(X,K,V)$ independent.  Consider
  \[
  U = e^{2\pi iX/N}e^{2\pi i K/N}V.
  \] 
  
  On one hand, it is easy to see that $(X+K)$ is uniformly distributed
  in $[0,N]$, so that $e^{2\pi i (X+K)/N}$ is uniformly distributed on
  $\Circle$. Thus $U\overset{d}=\omega V$ for $\omega$ uniform in
  $\Circle$ and independent of $V$. It is then straightforward to see
that $U \in \Unitary{N}$ is uniformly distributed (cf.\ the
  proof of \cite[Lemma 2.5]{MM12}).
  
  On the other hand, if $I_N$ is the $N\times N$ identity matrix, then
  $e^{2\pi i K/N} I_N \in \SUnitary{N}$. By the translation invariance
  of uniform measure on $\SUnitary{N}$ this implies that $e^{2\pi i K
    / N}V \overset{d}=V$, and so $e^{2\pi i X/N} V \overset{d}=U$.
\end{proof}

\medskip

\begin{proof}[Proof of Theorem \ref{T:LSI}]
  First, for the interval $[0, 2\pi]$ equipped with its standard
  metric and uniform measure, the optimal constant in \eqref{E:LSI}
  for functions $f$ with $f(0) = f(2\pi)$ is known to be $1$, see
  e.g.\ \cite{Wei}. This fact completes the proof --- with a better
  constant than stated above --- in the case $N = 1$, since
  $\Unitary{1} = \Circle$; we assume from now on that $N \ge 2$.  By
  reflection, the optimal constant for general locally Lipschitz
  functions on $[0, \pi]$ is also $1$.  It follows by a scaling
  argument that the optimal logarithmic Sobolev constant on $\left[0,
    \frac{\pi \sqrt{2}}{ \sqrt{N}}\right)$ is $2/N$.

  By the Bakry--\'Emery Ricci curvature criterion \cite{BaEm},
  $\SUnitary{N}$ satisfies a log-Sobolev inequality with constant
  $2/N$ when equipped with its geodesic distance, and hence also when
  equipped with the Hilbert--Schmidt metric (see Section 4.4 and
  Appendix F of \cite{AGZ}).  By the tensorization property of
  log-Sobolev inequalities in Euclidean spaces (see \cite[Corollary
  5.7]{Led}), the product space $\left[0,
    \frac{\pi\sqrt{2}}{\sqrt{N}}\right) \times \SUnitary{N}$, equipped
  with the $L_2$-sum metric, satisfies a log-Sobolev inequality with
  constant $2/N$ as well.

  Define the map $F:\left[0,\frac{\pi\sqrt{2}}{\sqrt{n}}\right) \times
  \SUnitary{N} \to \Unitary{N}$ by $F(t,V) = e^{\sqrt{2} it /
    \sqrt{N}} V$. By Lemma \ref{T:coupling}, the push-forward via $F$
  of the product of uniform measure on $\left[0,
    \frac{\pi\sqrt{2}}{\sqrt{N}}\right) $ with uniform measure on
  $\SUnitary{N}$ is uniform measure on $\Unitary{N}$.  Moreover, this
  map is $\sqrt{3}$-Lipschitz:
  \begin{align*}
    \norm{e^{\sqrt{2} i t_1/\sqrt{N}} V_1 -
      e^{\sqrt{2} i t_2/\sqrt{N}} V_2}_{HS}
    & \le \norm{ e^{\sqrt{2} i t_1/\sqrt{N}} V_1 - 
      e^{\sqrt{2} i t_1/\sqrt{N}} V_2}_{HS} \\
    & \qquad + \norm{e^{\sqrt{2} i t_1/\sqrt{N}} V_2 -
      e^{\sqrt{2} i t_2/\sqrt{N}} V_2}_{HS} \\
    & = \norm{V_1 - V_2}_{HS}
    + \norm{e^{\sqrt{2} i t_1 / \sqrt{N}}I_N -
      e^{\sqrt{2} i t_2 / \sqrt{N}} I_N }_{HS} \\
    & \le \norm{ V_1 - V_2}_{HS}
    + \sqrt{2} \abs{t_1- t_2} \\
    & \le \sqrt{3} \sqrt{\norm{ V_1 - V_2}_{HS}^2
    + \abs{t_1- t_2}^2}.
  \end{align*}

  Since the map $F$ is $\sqrt{3}$-Lipschitz, its image $\Unitary{N}$
  with the (uniform) image measure satisfies a logarithmic Sobolev
  inequality with constant $(\sqrt{3})^2 \frac{2}{N} = \frac{6}{N}$.
\end{proof}

\medskip

\begin{cor} \label{T:unitary-concentration} Given $N_1, \dotsc, N_k
  \in \N$, denote by $M = \Unitary{N_1} \times \dotsb \Unitary{N_k}$
  equipped with the $L_2$-sum of Hilbert--Schmidt metrics.  Suppose
  that $F:M \to \R$ is $L$-Lipschitz, and that $\{ U_j \in
  \Unitary{N_j} : 1 \le j \le k \}$ are independent, uniform random
  unitary matrices.  Then for each $t > 0$,
  \[
  \Prob \bigl[ F(U_1, \dotsc, U_k) \ge \E F(U_1, \dotsc, U_k) + t \bigr]
  \le e^{- N t^2 / 12 L^2},
  \]
  where $N = \min\{N_1, \dotsc, N_k\}$.
\end{cor}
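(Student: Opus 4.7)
The plan is to deduce the corollary from Theorem \ref{T:LSI} in two standard steps: tensorize the log-Sobolev inequality to the product space, then apply Herbst's argument to turn the log-Sobolev inequality into subgaussian concentration.

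First, by Theorem \ref{T:LSI}, each factor $\Unitary{N_j}$ (with uniform measure and Hilbert--Schmidt metric) satisfies a log-Sobolev inequality with constant $6/N_j$. Since $N = \min\{N_1,\dotsc,N_k\}$, we have $6/N_j \le 6/N$ for all $j$, so each factor satisfies LSI with the common constant $6/N$. The tensorization property of the logarithmic Sobolev inequality (see \cite[Corollary 5.7]{Led}, applied in the same way as in the proof of Theorem \ref{T:LSI}) then yields that $M$, equipped with the product uniform measure and the $L_2$-sum of Hilbert--Schmidt metrics, satisfies LSI with constant $6/N$ as well.

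Second, I would invoke the Herbst argument: if $(X,d,\mu)$ satisfies LSI with constant $C$ and $F\colon X\to\R$ is $L$-Lipschitz, then setting $Z_\lambda = \int e^{\lambda F}\,d\mu$ and applying \eqref{E:LSI} to $f = e^{\lambda F/2}$ gives a differential inequality for $\lambda\mapsto \lambda^{-1}\log Z_\lambda$ whose integration produces the Gaussian moment bound
\[
\int e^{\lambda(F - \E F)}\,d\mu \le e^{C L^2 \lambda^2 / 2}
\]
for all $\lambda > 0$. A Markov/Chernoff optimization in $\lambda$ then yields $\Prob[F \ge \E F + t] \le e^{-t^2/2CL^2}$. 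Substituting $C = 6/N$ gives precisely the exponent $-Nt^2/12L^2$ claimed in the corollary.

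There is no serious obstacle here; both tensorization and the Herbst argument are entirely standard and are the reason the log-Sobolev inequality of Theorem \ref{T:LSI} was proved in the first place. The only thing to check carefully is that the Lipschitz constant of $F$ on the product is measured with respect to the $L_2$-sum of Hilbert--Schmidt metrics (which is what tensorization produces), matching the hypothesis of the corollary, and that the worst factor-level constant $6/N$ is indeed what controls the exponent after tensorization.
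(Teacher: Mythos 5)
Your proof is correct and follows the same route as the paper: Theorem \ref{T:LSI} plus tensorization of log-Sobolev inequalities gives LSI with constant $6/N$ on the product, and the Herbst argument then yields the subgaussian concentration bound. The paper's proof is a two-sentence version of exactly this argument, citing \cite[Corollary 5.7]{Led} for tensorization and \cite[Theorem 5.3]{Led} for Herbst.
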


\begin{proof}
  By Theorem \ref{T:LSI} and the tensorization property of log-Sobolev
  inequalities \cite[Corollary 5.7]{Led}, $M$ satisfies a log-Sobolev
  inequality with constant $6/N$.  The stated concentration inequality
  then follows from the Herbst argument (see, e.g., \cite{Led},
  Theorem 5.3).
\end{proof}

\medskip

The lack of dependence on $k$ is a crucial feature of the inequality in
Corollary \ref{T:unitary-concentration}; unlike
logarithmic Sobolev inequalities, concentration inequalities
themselves do not tensorize without introducing a dependence on $k$.

\bibliographystyle{plain}
\bibliography{powers}

\begin{thebibliography}{10}

\bibitem{AGZ}
G.~W. Anderson, A.~Guionnet, and O.~Zeitouni.
\newblock {\em An Introduction to Random Matrices}, volume 118 of {\em
  Cambridge Studies in Advanced Mathematics}.
\newblock Cambridge University Press, Cambridge, 2010.

\bibitem{BaEm}
D.~Bakry and M.~{\'E}mery.
\newblock Diffusions hypercontractives.
\newblock In {\em S\'eminaire de probabilit\'es, {XIX}, 1983/84}, volume 1123
  of {\em Lecture Notes in Math.}, pages 177--206. Springer, Berlin, 1985.

\bibitem{Bh}
R.~Bhatia.
\newblock {\em Matrix Analysis}, volume 169 of {\em Graduate Texts in
  Mathematics}.
\newblock Springer-Verlag, New York, 1997.

\bibitem{Blower}
G.~Blower.
\newblock {\em Random Matrices: High Dimensional Phenomena}, volume 367 of {\em
  London Mathematical Society Lecture Note Series}.
\newblock Cambridge University Press, Cambridge, 2009.

\bibitem{Da12}
S.~Dallaporta.
\newblock Eigenvalue variance bounds for {W}igner and covariance random
  matrices.
\newblock {\em Random Matrices: Theory Appl.}, 1:1250007, 2012.

\bibitem{dBGM}
E.~del Barrio, E.~Gin{\'e}, and C.~Matr{\'a}n.
\newblock Central limit theorems for the {W}asserstein distance between the
  empirical and the true distributions.
\newblock {\em Ann. Probab.}, 27(2):1009--1071, 1999.

\bibitem{Diaconis}
Persi Diaconis.
\newblock Patterns in eigenvalues: the 70th {J}osiah {W}illard {G}ibbs lecture.
\newblock {\em Bull. Amer. Math. Soc. (N.S.)}, 40(2):155--178, 2003.

\bibitem{Dyson}
F.~J. Dyson.
\newblock Correlations between eigenvalues of a random matrix.
\newblock {\em Comm. Math. Phys.}, 19:235--250, 1970.

\bibitem{Gu}
J.~Gustavsson.
\newblock Gaussian fluctuations of eigenvalues in the {GUE}.
\newblock {\em Ann. Inst. H. Poincar\'e Probab. Statist.}, 41(2):151--178,
  2005.

\bibitem{HiPeUe}
F.~Hiai, D.~Petz, and Y.~Ueda.
\newblock A free logarithmic {S}obolev inequality on the circle.
\newblock {\em Canad. Math. Bull.}, 49(3):389--406, 2006.

\bibitem{HKPV06}
J.~B. Hough, M.~Krishnapur, Y.~Peres, and B.~Vir{\'a}g.
\newblock Determinantal processes and independence.
\newblock {\em Probab. Surv.}, 3:206--229, 2006.

\bibitem{KaSa}
N.~M. Katz and P.~Sarnak.
\newblock {\em Random Matrices, {F}robenius Eigenvalues, and Monodromy},
  volume~45 of {\em American Mathematical Society Colloquium Publications}.
\newblock American Mathematical Society, Providence, RI, 1999.

\bibitem{Led}
M.~Ledoux.
\newblock {\em The Concentration of Measure Phenomenon}, volume~89 of {\em
  Mathematical Surveys and Monographs}.
\newblock American Mathematical Society, Providence, RI, 2001.

\bibitem{MM12}
E.~S. Meckes and M.~W. Meckes.
\newblock Concentration and convergence rates for spectral measures of random
  matrices.
\newblock {\em Probab. Theory Related Fields}, 156:145--164, 2013.

\bibitem{Ra97}
E.~M. Rains.
\newblock High powers of random elements of compact {L}ie groups.
\newblock {\em Probab. Theory Related Fields}, 107(2):219--241, 1997.

\bibitem{Ra03}
E.~M. Rains.
\newblock Images of eigenvalue distributions under power maps.
\newblock {\em Probab. Theory Related Fields}, 125(4):522--538, 2003.

\bibitem{Rot2}
O.~S. Rothaus.
\newblock Diffusion on compact {R}iemannian manifolds and logarithmic {S}obolev
  inequalities.
\newblock {\em J. Funct. Anal.}, 42(1):102--109, 1981.

\bibitem{So00}
A.~B. Soshnikov.
\newblock Gaussian fluctuation for the number of particles in {A}iry, {B}essel,
  sine, and other determinantal random point fields.
\newblock {\em J. Statist. Phys.}, 100(3-4):491--522, 2000.

\bibitem{Talagrand}
M.~Talagrand.
\newblock {\em The Generic Chaining: Upper and Lower Bounds of Stochastic
  Processes}.
\newblock Springer Monographs in Mathematics. Springer-Verlag, Berlin, 2005.

\bibitem{Wei}
F.~B. Weissler.
\newblock Logarithmic {S}obolev inequalities and hypercontractive estimates on
  the circle.
\newblock {\em J. Funct. Anal.}, 37(2):218--234, 1980.

\end{thebibliography}

\end{document}